\documentclass[11pt,twoside]{amsart}
\usepackage{amsfonts}
\usepackage{amsmath}
\usepackage{amssymb}
\usepackage{mathrsfs}

\newtheorem{theo}{Theorem}

\newtheorem{lem}{Lemma}
\newtheorem{prop}{Proposition}

\theoremstyle{definition}

\theoremstyle{remark}
\newtheorem{rem}{\bf Remark\/}

\newtheorem{exple}{\bf Example\/}
\numberwithin{equation}{section}

\def\1{{\mathchoice {\rm 1\mskip-4mu l} {\rm 1\mskip-4mu l}{\rm 1\mskip-4.5mu l} {\rm 1\mskip-5mu l}}}
\newcommand{\ds}{\displaystyle}
\newcommand{\w}{\wedge}

\textheight 24cm                                            
\textwidth16cm                                              
\oddsidemargin 0cm \evensidemargin 0cm \voffset-1cm
\setlength{\oddsidemargin}{0cm} \setlength{\evensidemargin}{0cm} 

\title[On the directional Lelong-Demailly numbers]{On the directional Lelong-Demailly numbers of positive currents}

\author[M. Zaway]{Mohamed Zaway}
\email{mohamed\_zaway@yahoo.fr}
\author[H. Hawari]{Haithem Hawari}
\email{haithem.hawari@yahoo.fr}
\author[N. Ghiloufi]{Noureddine Ghiloufi}
\email{noureddine.ghiloufi@fsg.rnu.tn}
\address{Department of Mathematics\\ Faculty of sciences of Gab\`es \\ University of Gab\`es \\ Erriadh 6072 Gab\`es Tunisia.}
\subjclass[2000]{32U25; 32U40; 32U05}
\keywords{Lelong number, positive plurisubharmonic current, plurisubharmonic function.}
\begin{document}
\maketitle

\begin{abstract}
    In this paper we study the existence of  the directional Lelong-Demailly numbers of positive plurisubharmonic or plurisuperharmonic currents. We prove the independence of these numbers to the system of coordinates. Moreover these numbers will be given by locally integrable functions. \\

    \textbf{Sur les nombres de Lelong-Demailly  directionnels des courants positifs.}\\
    \textsc{R\'esum\'e.} Dans cet article on \'etudie l'existence des nombres de Lelong-Demailly directionnels des courants positifs plurisousharmoniques ou plurisurharmoniques. On montre l'ind\'ependance de ces nombres du syst\`eme des coordonn\'ees. De plus ces nombres seront donn\'es par des fonctions localement int\'egrables.
\end{abstract}

\section{Introduction}
    The problem of the existence of Lelong numbers of positive currents on open subsets of $\Bbb C^n$ is still open, it was considered firstly by Lelong in the 1950'th, who had solved it in the case of positive closed currents, then in 1982'th this problem was solved by Skoda \cite{Sk} in the case of positive plurisubharmonic (psh) currents. Replacing the standard K\"ahler form in the definition of the Lelong number by a $(1,1)-$form relative to a psh function, Demailly \cite{De} prove the existence of so called generalized Lelong numbers or Lelong-Demailly numbers for positive closed currents. Finally this problem was studied by the second author \cite{Gh} in case of positive plurisuperharmonic (prh) currents. In 1996, Alessandrini and Bassanelli \cite{Al-Ba} consider an analogous problem, the existence of the directional Lelong numbers of positive currents and they solved it in plurisubharmonic case. A generalized form of this problem appear then, (directional Lelong-Demailly numbers) and it is solved by Toujani \cite{To} in the same case. A natural question can be asked in this statement, is there a relationship between the two notions? Our goal is to study the case of positive prh currents where we show in particular that there is no relationship between the two notions.\\

    In the hole of this paper, we consider $n,\ m,\ k\in\Bbb N$ such that $k\leq n$. We use $(z,t)$ to indicate an element of $\Bbb C^N:=\Bbb C^n\times\Bbb C^m$. Let $\Omega:=\Omega_1\times\Omega_2$ be an open subset of $\Bbb C^N$ and $B$ be an open subset relatively compact  in $\Omega_2$. We consider two  $C^2$ positive  psh (psh) functions $\varphi$ and $v$ on $\Omega_1$ and $\Omega_2$ respectively such that $\log\varphi$ is psh in the open subset $\{\varphi>0\}$. We assume that the two functions $\varphi$ and $v$ are semi-exhaustive
    (i.e. there exists $R=R(\varphi)>0$ such that for every $c\in]-\infty,R[$, one has
    $\{z\in\Omega_1;\ \varphi(z)<c\}\subset\subset\Omega_1$).
    For $0<r<R$ and $r_1<r_2<R$, we set:
    $B_\varphi(r)=\{z\in\Omega;\ \varphi(z)<r\}$ and
    $B_\varphi(r_1,r_2)=\{z\in\Omega;\ r_1\leq\varphi(z)<r_2\}$.\\

    To simplify the notations, we set: $\beta_\varphi:=dd^c\varphi(z)$, $\beta_v:=dd^cv(t)$ and
    $\alpha_\varphi:=dd^c \log\varphi(z)$. In the classical case, we set $\omega_{z}=dd^c|z|^2$ and $\omega_{t}=dd^c|t|^2$ the K\"ahler forms  of $\Bbb C^n$ and $\Bbb C^m$ respectively, and  $\omega=\omega_{t}+\omega_{z}$ the corresponding one of $\Bbb C^N$.\\

    Let $T$ be a positive current of bidegree $(k,k)$ on $\Omega$. The purpose of this paper is to study the directional Lelong-Demailly number of $T$ with respect to $B$ relatively to  $\varphi$ and $v$ defined as the limit, $$\nu(T,\varphi,B ):=\lim_{r\to 0^+}\nu(T,\varphi,B,r)$$ where $\nu(T,\varphi,B,.)$ is the function defined by
        $$\nu(T,\varphi,B,r):=\frac{1}{r^{n-k}}\int_{B_\varphi(r)\times B}\ T\w\beta_\varphi^{n-k}\w\beta_v^m.$$

        In classical case, we use
        $$\nu(T,B):=\lim_{r\to0^+}\frac{1}{r^{2(n-k)}}\int_{\{|z|<r\}\times B}\ T\w\omega_z^{n-k}\w\omega_t^m.$$
   These numbers are studied in the case of positive psh current  by  Alessandrini and Bassanelli \cite{Al-Ba} and they showed that it's independent to the system of coordinates in the classical case. The general case has been studied  by Toujani \cite{To}. \\The main tool in this paper is the following formula:
    \begin{prop} (Lelong-Jensen Formula) (See \cite{To})
        We assume that the current $T$ is positive psh or prh on $\Omega$. Let $(T_j)_{j\in\Bbb N}$ be a
smooth regularizing sequence of  $T$. Then for every $0<r_1<r_2<R$  and for all $1\leq p\leq n-k$, $0\leq q <p$, one has

            $$\begin{array}{ll}
              &\ds\frac{1}{r_2^{q+1}}\ \int_{B_\varphi(r_2)\times B}\
              T_j\w{\alpha_\varphi}^{p-q-1}\w\beta_\varphi^{n-k-p+q+1}\w\beta_v^m \\
              &\hfill\ds -\frac{1}{ r_1^{q+1}}\ \int_{B_\varphi(r_1)\times B}\
              T_j\w\alpha_\varphi^{p-q-1}\w\beta_\varphi^{n-k-p+q+1}\w\beta_v^m\\
              =&\ds \int_{B_\varphi(r_1,r_2)\times B}\
              T_j\w{\alpha_\varphi}^p\w{{\beta_\varphi}^{n-k-p}}\w{\beta_v}^m\\
              &\ds +\int_{r_1}^{r_2}\  \left(\frac{1}{ s^{q+1}}-\frac{1}{ r_2^{q+1}}\right) ds\
              \int_{B_\varphi(s)\times B}\ dd^cT_j\w\alpha_\varphi^{p-q-1}\w\beta_\varphi^{n-k-p+q}\w\beta_v^m\\
              &\ds+\int_0^{r_1}\ \left(\frac{1}{ r_1^{q+1}}-\frac{1}{ r_2^{q+1}}\right)ds\ \int_{B_\varphi(s)\times B}\
              dd^cT_j\w\alpha_\varphi^{p-q-1}\w\beta_\varphi^{n-k-p+q}\w\beta_v^m\
            \end{array}$$
        This equality still  true when we replace $T_j$ by $T$ for $q=p-1$.
    \end{prop}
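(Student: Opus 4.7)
The plan is to prove the identity first for the smooth regularization $T_j$ by a Stokes-type integration by parts, then to pass to the weak limit $j \to \infty$ in the case $q = p - 1$, where the $\alpha_\varphi^{p-q-1}$ factor disappears and all test forms become continuous.

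For smooth $T_j$, I would begin by combining the two $ds$-integrals on the right via Fubini into a single bulk integral $\int_\Omega \chi(\varphi)\, dd^c T_j \wedge \alpha_\varphi^{p-q-1} \wedge \beta_\varphi^{n-k-p+q} \wedge \beta_v^m$, where
$$\chi(s) := \int_s^{r_2} \!\left(\max(u, r_1)^{-(q+1)} - r_2^{-(q+1)}\right) du$$
is continuous, compactly supported in $[0, r_2]$, and of class $C^1$ across $r_1, r_2$, so that $\chi''$ is the locally bounded function $(q+1) s^{-(q+2)} \mathbf{1}_{(r_1, r_2)}$ without any Dirac contributions. Since the closed forms $\alpha_\varphi$, $\beta_\varphi$, $\beta_v$ are also $d^c$-closed (being $dd^c$ of real functions), Stokes' theorem applied to the compactly supported form $\chi(\varphi)\, T_j \wedge \alpha_\varphi^{p-q-1} \wedge \beta_\varphi^{n-k-p+q} \wedge \beta_v^m$ transfers $dd^c$ from $\chi(\varphi)$ to $T_j$, yielding the equality of the bulk integral above with $\int dd^c\chi(\varphi) \wedge T_j \wedge (\cdots)$.

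Expanding $dd^c\chi(\varphi) = \chi'(\varphi)\beta_\varphi + \chi''(\varphi)\, d\varphi \wedge d^c\varphi$ and substituting the fundamental identity $d\varphi \wedge d^c\varphi = \varphi\beta_\varphi - \varphi^2\alpha_\varphi$ (equivalent to the definition $\alpha_\varphi = dd^c\log\varphi$ on $\{\varphi > 0\}$), then integrating separately over $\{\varphi < r_1\}$ and $\{r_1 \leq \varphi < r_2\}$, produces the desired boundary piece $r_2^{-(q+1)} F_j(r_2) - r_1^{-(q+1)} F_j(r_1)$ together with two residual annular integrals. The latter collapse to $-\int_{B_\varphi(r_1, r_2) \times B} T_j \wedge \alpha_\varphi^p \wedge \beta_\varphi^{n-k-p} \wedge \beta_v^m$ by the algebraic identity
$$(q+1)\varphi^{-q}\alpha_\varphi^{p-q} \wedge \beta_\varphi^{n-k-p+q} - q\varphi^{-(q+1)}\alpha_\varphi^{p-q-1} \wedge \beta_\varphi^{n-k-p+q+1} = \alpha_\varphi^p \wedge \beta_\varphi^{n-k-p},$$
which reduces inductively in $q$ to the base case $\alpha_\varphi^2 = 2\varphi^{-1}\alpha_\varphi \wedge \beta_\varphi - \varphi^{-2}\beta_\varphi^2$, itself a direct consequence of $\alpha_\varphi = \beta_\varphi/\varphi - d\varphi \wedge d^c\varphi/\varphi^2$ together with $(d\varphi \wedge d^c\varphi)^2 = 0$. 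This completes the proof for smooth $T_j$.

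For the extension to $T$ when $q = p - 1$, all test forms in the LHS and in the $dd^c T_j$-terms become continuous polynomials in $\beta_\varphi$ and $\beta_v$, while the only $\alpha_\varphi^p$-factor sits in the annular term whose domain $B_\varphi(r_1, r_2) \times B$ stays at positive distance from $\{\varphi = 0\}$, where $\alpha_\varphi$ is smooth. Thus the weak convergences $T_j \to T$ and $dd^c T_j \to dd^c T$ (valid in both the psh and the prh settings) pass the identity to the limit term by term. The main technical obstacle throughout is the integrability of the singular expressions $\varphi^{-q}\alpha_\varphi^{p-q} \wedge (\cdots)$ near $\{\varphi = 0\}$; this is secured by the positivity bound $\varphi\alpha_\varphi \leq \beta_\varphi$ as $(1,1)$-currents (equivalently $d\log\varphi \wedge d^c\log\varphi \geq 0$), which controls $\varphi^{q+1}\alpha_\varphi^{p-q-1} \wedge \beta_\varphi^{n-k-p+q+1}$ by the smooth form $\beta_\varphi^{n-k}$ on every $B_\varphi(r) \times B$ with $r < R$.
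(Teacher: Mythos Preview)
The paper does not give its own proof of this proposition; it is quoted from Toujani \cite{To}. Your argument is therefore not being compared against anything in the paper itself, and I can only assess it on its own merits.

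Your strategy is the standard Demailly-type one and it is essentially correct. The choice of $\chi(s)=\int_s^{r_2}\bigl(\max(u,r_1)^{-(q+1)}-r_2^{-(q+1)}\bigr)du$ does produce a $C^1$ cutoff with $\chi''(s)=(q+1)s^{-(q+2)}\mathbf 1_{(r_1,r_2)}(s)$ and no boundary Diracs, and the computation
\[
dd^c\chi(\varphi)=\bigl(\chi'(\varphi)+\varphi\chi''(\varphi)\bigr)\beta_\varphi-\varphi^2\chi''(\varphi)\,\alpha_\varphi
\]
combined with your inductive identity $(q+1)\varphi^{-q}\alpha_\varphi^{p-q}\w\beta_\varphi^{n-k-p+q}-q\varphi^{-(q+1)}\alpha_\varphi^{p-q-1}\w\beta_\varphi^{n-k-p+q+1}=\alpha_\varphi^p\w\beta_\varphi^{n-k-p}$ (which I checked by induction on $q$, using $\alpha_\varphi^2=2\varphi^{-1}\alpha_\varphi\w\beta_\varphi-\varphi^{-2}\beta_\varphi^2$ at the base) does collapse the annular contribution to exactly the $\alpha_\varphi^p$-term. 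After reorganising, one gets precisely $r_2^{-(q+1)}F_j(r_2)-r_1^{-(q+1)}F_j(r_1)$ minus the annular integral, equal to the two $ds$-integrals, which is the stated formula.

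Two small points you should tighten. First, the domain of integration is $B_\varphi(\cdot)\times B$ and Stokes produces a boundary contribution on $B_\varphi(\cdot)\times\partial B$ as well; this vanishes because the factor $\beta_v^m$ is a $(2m)$-form in $t$ and restricts to zero on the $(2m-1)$-dimensional $\partial B$, but you should say so. Second, your integrability remark at the end is slightly off in the exponents: from $\varphi\alpha_\varphi\le\beta_\varphi$ one obtains $\varphi^{p-q-1}\alpha_\varphi^{p-q-1}\w\beta_\varphi^{n-k-p+q+1}\le\beta_\varphi^{n-k}$, i.e.\ a bound by $\varphi^{-(p-q-1)}\beta_\varphi^{n-k}$, not $\varphi^{q+1}$ times something. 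For smooth $T_j$ this is harmless because the singular $\alpha_\varphi$'s only occur in the annular term where $\varphi\ge r_1>0$; for the passage to $T$ at $q=p-1$ no $\alpha_\varphi$ appears outside the annulus anyway, as you correctly note. The limit step $T_j\to T$ then goes through for all $r_1,r_2$ outside a null set and hence for all $r_1,r_2$ by monotonicity, which is the standard argument.
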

    As a consequence of this formula, if $T$ is a positive psh current on $\Omega$ then the function $\nu(T,\varphi,B,.)$ is positive and increasing on $]0,R[$ so its limit at 0, $\nu(T,\varphi,B)$, exists. This result is true when $m=0$ (Lelong-Demailly numbers).\\
    This statement is so different for positive prh currents as the following examples shown:\\
  \begin{exple}
     The current $T_0:=-\log|z_2|^2[z_1=0]$ is an example of positive prh current of bidimension $(1,1)$ on the unit ball of $\Bbb C^2$ which has no  Lelong number at 0, but it has (classical) directional Lelong numbers (in both directions) and this can answer to the question asked before.
  \end{exple}

\begin{exple}
  The current $T_1:=-\log(|z_1|^2+|z_3|^2)[z_2=0]$ is positive prh of bidimension $(2,2)$ on the unit ball $\Bbb B$ of $\Bbb C^3$ which has no directional Lelong number with respect to the disc $B:=\mathbb{D}(0,\frac12)\subset\{(0,0)\}\times\Bbb C$.
\end{exple}
        In fact, for $r<\frac{\sqrt{3}}2$ we have $\{|z_1|^2+|z_2|^2<r^2\}\times B\subset\subset \mathbb{B}$. Thanks to Fubini-Tonelli theorem we have
        $$\begin{array}{l}
            \nu(T_1,B,r)\\
            =\ds\frac{-1}{\pi^2r^2}\int_{\{|z_1|<r\}\times B}\log(|z_1|^2+|z_3|^2) \frac i2 dz_1\w d\overline{z}_1 \w \frac i2 dz_3\w d\overline{z}_3\\
            =\ds\frac 1{\pi^2r^2}\int_{B}\left(\int_{|z_1|<r}-\log(|z_1|^2+|z_3|^2) \frac i2 dz_1\w d\overline{z}_1\right)\frac i2 dz_3\w d\overline{z}_3\\
            =\ds\frac 1{\pi^2r^2}\int_{B}\left(\int_0^{2\pi}\int_0^r -t\log(t^2+|z_3|^2)dtd\theta \right)\frac i2 dz_3\w d\overline{z}_3\\
            =\ds\frac 2{\pi r^2}\int_{B}\left(-\frac12r^2 \log(r^2+|z_3|^2)+\int_0^r \frac{t^3}{t^2+|z_3|^2}dt\right)\frac i2 dz_3\w d\overline{z}_3\\
            =\ds\frac2{\pi r^2}\int_{B} \left(-\frac12r^2 \log(r^2+|z_3|^2)+ \frac{r^2}2-\frac{|z_3|^2}2\left[\log(r^2+|z_3|^2)-\log(|z_3|^2)\right]\right)\frac i2 dz_3\w d\overline{z}_3\\
            =\ds \frac 2{r^2}\int_0^{\frac12}\left(-\frac12r^2 \log(r^2+s^2)+\frac{r^2}2-\frac{s^2}2\left[\log(r^2+s^2)-\log(s^2)\right]\right)sds\\
            \geq\ds\frac{-\log(r^2+\frac14)}{128r^2}+\frac{r^2\log(r^2+\frac14)}2+\frac1{r^2}(-\frac{\log r}{64}-\frac 1{256}).
        \end{array}$$
         The result is obtained by tending  $r$ to 0.\hfill$\square$\\

    As the second example shows, we need a suitable condition to prove the existence of directional Lelong-Demailly numbers; for this, as it was introduced in \cite{Gh}, we say that $T$ ($B,\ \varphi$ and $v$) satisfy Condition $(C)$ if the function $s\longmapsto s^{-1}\nu(dd^cT,\varphi,B,s)$ is integrable in a neighborhood of 0. Using the Lelong-Jensen formula, we can   see that Condition $(C)$ is satisfied for a positive psh current.

    \begin{theo}\label{the1}
            If the current $T$ is  positive prh and satisfies Condition $(C)$ then the directional Lelong-Demailly number $\nu(T,\varphi,B)$ exists.
       \end{theo}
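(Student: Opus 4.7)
The approach is to specialize the Lelong--Jensen formula so that its left-hand side is precisely $\nu(T,\varphi,B,r_2)-\nu(T,\varphi,B,r_1)$. Taking $p=n-k$ and $q=p-1=n-k-1$ yields $\alpha_\varphi^{p-q-1}=1$, $\beta_\varphi^{n-k-p+q+1}=\beta_\varphi^{n-k}$, and $r^{q+1}=r^{n-k}$; because $q=p-1$ the proposition allows us to replace $T_j$ by $T$. Setting $g(s):=-\nu(dd^cT,\varphi,B,s)$, which is $\geq 0$ since $T$ is prh (so $dd^cT\leq 0$), the identity becomes
$$\nu(T,\varphi,B,r_2)-\nu(T,\varphi,B,r_1)=H(r_1,r_2)-I_1(r_1,r_2)-I_2(r_1,r_2),$$
where
$$H(r_1,r_2):=\int_{B_\varphi(r_1,r_2)\times B}T\wedge\alpha_\varphi^{n-k}\wedge\beta_v^m\geq 0$$
(using that $\log\varphi$ is psh on $\{\varphi>0\}$, so $\alpha_\varphi\geq 0$), and
$$I_1(r_1,r_2)=\int_{r_1}^{r_2}\left(\frac{1}{s}-\frac{s^{n-k-1}}{r_2^{n-k}}\right)g(s)\,ds,\quad I_2(r_1,r_2)=\left(\frac{1}{r_1^{n-k}}-\frac{1}{r_2^{n-k}}\right)\int_0^{r_1}s^{n-k-1}g(s)\,ds,$$
both $\geq 0$.

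Fix $r_2\in(0,R)$ and let $r_1\to 0^+$. Condition $(C)$ states $g(s)/s\in L^1_{\mathrm{loc}}$ near $0$. Writing $s^{n-k-1}g(s)=s^{n-k}\cdot g(s)/s$ gives the key pointwise bound $\int_0^{r_1}s^{n-k-1}g(s)\,ds\leq r_1^{n-k}\int_0^{r_1}g(s)/s\,ds$, so $I_2(r_1,r_2)\leq\int_0^{r_1}g(s)/s\,ds\to 0$ by absolute continuity of the integral. Splitting $I_1=\int_{r_1}^{r_2}g(s)/s\,ds-r_2^{-(n-k)}\int_{r_1}^{r_2}s^{n-k-1}g(s)\,ds$ and applying the same bound shows both pieces have finite limits, hence $I_1(r_1,r_2)\to I_1(0,r_2)<\infty$. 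Finally, $H(r_1,r_2)$ is monotonically increasing as $r_1\downarrow 0$ (positive integrand on a nested family of domains), so has a limit $H(0,r_2)\in[0,+\infty]$.

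The crux is showing $H(0,r_2)<\infty$. Using only the trivial nonnegativity $\nu(T,\varphi,B,r_1)\geq 0$, the identity yields
$$H(r_1,r_2)\leq\nu(T,\varphi,B,r_2)+I_1(r_1,r_2)+I_2(r_1,r_2),$$
whose right-hand side stays bounded in $r_1$ by the previous paragraph; therefore $H(0,r_2)<\infty$, and passing to the limit in the Lelong--Jensen identity shows
$$\nu(T,\varphi,B)=\lim_{r_1\to 0^+}\nu(T,\varphi,B,r_1)=\nu(T,\varphi,B,r_2)-H(0,r_2)+I_1(0,r_2)$$
exists as a finite real number. The main obstacle is precisely this last step: $\alpha_\varphi=dd^c\log\varphi$ typically blows up near $\{\varphi=0\}$, so the positive measure $T\wedge\alpha_\varphi^{n-k}\wedge\beta_v^m$ need not be locally finite there a priori, and Condition $(C)$ only controls the $dd^cT$-terms. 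The Lelong--Jensen identity itself must be used in a self-referential way, bounding $H$ through the non-negativity of the very quantity $\nu(T,\varphi,B,\cdot)$ whose limit we are trying to extract.
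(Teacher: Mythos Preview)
Your proof is correct and follows essentially the same approach as the paper: both specialize the Lelong--Jensen formula at $p=n-k$, $q=n-k-1$, use Condition $(C)$ to control the $dd^cT$-terms, and exploit the positivity of $\nu(T,\varphi,B,\cdot)$ together with the nonnegativity of the $\alpha_\varphi$-integral. The only difference is packaging: the paper introduces the auxiliary function $g(r)=\nu(T,\varphi,B,r)+\int_0^r\bigl((s/r)^{n-k}-1\bigr)\nu(dd^cT,\varphi,B,s)\,s^{-1}ds$ and observes in one stroke that $g(r_2)-g(r_1)=H(r_1,r_2)\geq 0$, so $g$ is nonnegative increasing and the correction vanishes at $0$; your argument unpacks this same computation term by term with $r_2$ fixed and $r_1\to 0^+$.
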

    \begin{proof}
        For $0<r<R$, we consider
        $$g(r):=\nu(T,\varphi,B,r)+\ds\int_0^r\left(\frac{s^{n-k}}{r^{n-k}}-1\right)\frac{\nu(dd^cT,\varphi,B,s)}{s}ds.$$

        Thanks to Condition $(C)$, the function $g$ is well defined. Moreover it is positive  on $]0,R[$ because the  function $\nu(dd^cT,B,.)$ is negative on $]0,R[.$\\
        As the current $T$ is  positive prh, then using  Lelong-Jensen formula, we have for $0<r_1<r_2<R$,
        $$\begin{array}{cl}
                &g(r_2)-g(r_1)\\
                =&\ds\nu(T,\varphi,B,r_2)-\nu(T,\varphi,B,r_1)+\ds \frac{1}{r_2^{n-k}}\int_0^{r_2}s^{n-k-1}\nu(dd^cT,\varphi,B,s)ds\\
                &\ds-\frac{1}{{r_1}^{n-k}}\int_0^{r_1}s^{n-k-1}\nu(dd^cT,\varphi,B,s)ds -\ds\int_{r_1}^{r_2}\frac{\nu(dd^cT,\varphi,B,s)}{s}ds\\
                =&\ds \int_{B_\varphi(r_1,r_2)\times B}T\w\alpha_\varphi^{n-k}\w \beta_v^m\geq0.
            \end{array}$$
        Hence the function $g$ is positive and increasing on $]0,R[$, thus its limit when $r$ goes to 0 exists. As Condition $(C)$ is satisfied and $(s/r)^{n-k}-1$  is uniformly bounded  in neighborhood of $0$, then
        $$\ds\lim_{r\rightarrow0^+}\int_0^r\left(\frac{s^{n-k}}{r^{n-k}}-1\right)\frac{\nu(dd^cT,\varphi,B,s)}{s}ds=0.$$
        Therefore
        $$\ds \lim_{r\rightarrow0^+} g(r)=\ds\lim_{r\rightarrow0^+}\nu(T,\varphi,B,r)=\nu(T,\varphi,B).$$
    \end{proof}
    \begin{exple}
        Let $z=(z_1,z_2)$ and $T_2(z,t):=d\log(|z_1|^2+|z_2|^2)\w d^c\log(|z_1|^2+|z_2|^2)$. The current $T_2$ is positive prh of bidegree $(1,1)$ on $\Bbb C^3$ and $dd^cT_2(z,t)=-[z=0]$. If $B:=\mathbb{D}(0,1)\subset \{0\}\times \Bbb C$ is the unit disc of $\Bbb C$ then
        $$\nu(dd^cT_2,B,r)=\int_{\{|z|<r\}\times B}-[z=0]\w dd^c|t|^2=-1$$
        and
        $$\begin{array}{lcl}
            \nu(T_2,B,r)&=&\ds\frac{1}{r^2}\int_{\{|z|<r\}\times B}T_2\w\omega_z\w\omega_t\\&=&\ds\frac{1}{8\pi^3r^2}\int_{\{|z|<r\}\times B}\frac1{|z|^2}idz_1\w d\overline{z}_1\w idz_2\w d\overline{z}_2\w idt\w d\overline{t}=1.
        \end{array}$$
    \end{exple}
    This example proves that Condition $(C)$ in Theorem \ref{the1} is not necessary for the existence of directional Lelong-Demailly numbers.

\section{Independence to the system of coordinates }
    In this part we prove the independence of Lelong-Demailly numbers of positive psh or prh currents to the system of coordinates, this result is due to  Alessandrini and Bassanelli \cite{Al-Ba} in the classical case.
    \begin{prop}\label{prop2}
        Let $T$ be a positive prh or psh current of bidegree $(k,k)$ on  $\Omega$ and $p\in[2,+\infty[$. For every $r\in]0,R(\varphi)]$, we have
            \begin{equation}\label{equ2.1}
                \begin{array}{l}
                \nu(T,\varphi^p,B,r^p)\\
                =\ds p^{n-k}\left(\nu(T,\varphi,B,r)+\int_0^r\frac{\nu(dd^cT,\varphi,B,s)}s \left(\frac{s^{n-k}}{r^{n-k}} -\frac{s^{(n-k)p}}{r^{(n-k)p}}\right)ds\right).
                \end{array}
            \end{equation}
        In particular, if the current $T$ is positive prh, then $\nu(T,\varphi,B)$ exists if and only if $\nu(T,\varphi^p,B)$ exists for some  $p\geq 2$.\\
        Furthermore, in both cases (with the assumption that $\nu (T,\varphi,B)$ exists if $T$ is prh) we have
        $$\nu(T,\varphi^p,B) = p^{n-k}\left( \nu (T,\varphi,B)+ \frac{p-1}{p(n-k)}\nu(dd^cT,\varphi,B)\right).$$
    \end{prop}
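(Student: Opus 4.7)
The plan is to compute the left-hand side directly by expanding $\beta_{\varphi^p}^{n-k}$ and converting everything into one-dimensional integrals involving $\nu(T,\varphi,B,\cdot)$ and $\nu(dd^cT,\varphi,B,\cdot)$. Since $\varphi\geq 0$, one has $B_{\varphi^p}(r^p)=B_\varphi(r)$. Using $\beta_{\varphi^p}=p\varphi^{p-1}\beta_\varphi + p(p-1)\varphi^{p-2}d\varphi\w d^c\varphi$ together with $(d\varphi\w d^c\varphi)^2=0$, only two binomial terms survive:
$$\beta_{\varphi^p}^{n-k} = p^{n-k}\varphi^{(p-1)(n-k)}\beta_\varphi^{n-k} + p^{n-k}(n-k)(p-1)\varphi^{(p-1)(n-k)-1}\beta_\varphi^{n-k-1}\w d\varphi\w d^c\varphi.$$
Replacing $d\varphi\w d^c\varphi$ by $\varphi\beta_\varphi-\varphi^2\alpha_\varphi$ (from $\alpha_\varphi=\beta_\varphi/\varphi-d\varphi\w d^c\varphi/\varphi^2$) and setting $c:=(p-1)(n-k)+1$, this simplifies to
$$\beta_{\varphi^p}^{n-k} = p^{n-k}c\,\varphi^{(p-1)(n-k)}\beta_\varphi^{n-k} - p^{n-k}(c-1)\,\varphi^c\,\beta_\varphi^{n-k-1}\w\alpha_\varphi.$$
Wedging with $T\w\beta_v^m$ and integrating over $B_\varphi(r)\times B$ gives $\Psi(r):=r^{p(n-k)}\nu(T,\varphi^p,B,r^p) = p^{n-k}c\,A(r) - p^{n-k}(c-1)B'(r)$, with $A(r):=\int_{B_\varphi(r)\times B}\varphi^{(p-1)(n-k)}T\w\beta_\varphi^{n-k}\w\beta_v^m$ and $B'(r):=\int_{B_\varphi(r)\times B}\varphi^c T\w\beta_\varphi^{n-k-1}\w\alpha_\varphi\w\beta_v^m$.

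Next, the layer-cake formula $\int_{\{\varphi<r\}}\varphi^a d\mu=r^a\mu(\{\varphi<r\})-a\int_0^r s^{a-1}\mu(\{\varphi<s\})ds$ rewrites $A(r)$ as a one-dimensional integral of $F(s):=s^{n-k}\nu(T,\varphi,B,s)$ and $B'(r)$ as one of $H(s):=\int_{B_\varphi(s)\times B}T\w\beta_\varphi^{n-k-1}\w\alpha_\varphi\w\beta_v^m$. To eliminate $H$, I apply the Lelong-Jensen formula of Proposition 1 with $p=1$, $q=0$; this yields, up to an additive constant that drops out in $B'(r)$,
$$H(r)=\frac{F(r)}{r}+\frac{1}{r}\int_0^r G(s)\,ds-\int_0^r\frac{G(s)}{s}\,ds,\qquad G(s):=s^{n-k-1}\nu(dd^cT,\varphi,B,s).$$

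Substituting back into $\Psi(r)$ and performing routine integrations by parts in the resulting double integrals, the $F$-pieces arising from $A(r)$ cancel those arising from $H$ inside $B'(r)$, and the $G$-pieces consolidate to
$$\Psi(r)=p^{n-k}\Big[r^{p(n-k)}\nu(T,\varphi,B,r) + r^{(p-1)(n-k)}\int_0^r G(s)\,ds - \int_0^r s^{(p-1)(n-k)}G(s)\,ds\Big].$$
Dividing by $r^{p(n-k)}$ and rewriting both integrals in terms of $\nu(dd^cT,\varphi,B,s)/s$ gives (2.1).

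For the existence equivalence and the limit formula, I let $r\to 0^+$ in (2.1). Since $\pm dd^cT$ is a positive closed current, $\nu(dd^cT,\varphi,B,s)$ admits a finite limit $\nu(dd^cT,\varphi,B)$ as $s\to 0^+$ (this is the closed case, for which the directional Lelong-Demailly number is known to exist). The substitution $u=s/r$ combined with dominated convergence then gives
$$\lim_{r\to 0^+}\int_0^r\frac{\nu(dd^cT,\varphi,B,s)}{s}\Big(\frac{s^{n-k}}{r^{n-k}}-\frac{s^{p(n-k)}}{r^{p(n-k)}}\Big)ds = \frac{p-1}{p(n-k)}\,\nu(dd^cT,\varphi,B),$$
so $\nu(T,\varphi^p,B,r^p)$ converges iff $\nu(T,\varphi,B,r)$ does, with the stated relation between the limits. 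The main technical obstacle is the cancellation in the second-to-last step: one must track several nested double integrals and verify via integration by parts that the $\nu(T,\varphi,B,\cdot)$-contributions vanish while the $\nu(dd^cT,\varphi,B,\cdot)$-contributions reassemble into precisely the two integrals on the right of (2.1).
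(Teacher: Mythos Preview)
Your derivation of (2.1) is correct in spirit but follows a genuinely different route from the paper. The paper never expands $\beta_{\varphi^p}^{n-k}$; instead it exploits the single identity $\alpha_{\varphi^p}=p\,\alpha_\varphi$ (since $\log\varphi^p=p\log\varphi$). Applying the Lelong--Jensen formula once to $\varphi_\epsilon:=\varphi+\epsilon$ and once to $\psi_\epsilon:=(\varphi+\epsilon)^p$, with $r_1=\epsilon$ so that the lower ball is empty, both $\nu(T,\varphi_\epsilon,B,r)$ and $\nu(T,\psi_\epsilon,B,r^p)$ are expressed through the \emph{same} integral $\int T\wedge\alpha_{\varphi_\epsilon}^{n-k}\wedge\beta_v^m$ (up to the factor $p^{n-k}$) plus explicit one-dimensional integrals of $\nu(dd^cT,\varphi_\epsilon,B,\cdot)$; subtracting and letting $\epsilon\to0$ gives (2.1) with no integration by parts and no layer-cake step. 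For the limit relation the paper uses a squeeze based on the monotonicity of $\nu(dd^cT,\varphi,B,\cdot)$, whereas your substitution $u=s/r$ with dominated convergence is an equally valid (and arguably slicker) alternative.

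What each approach buys: the paper's route is shorter and, crucially, sidesteps the singularity of $\alpha_\varphi$ on $\{\varphi=0\}$ by working with $\varphi_\epsilon>0$ throughout. Your direct expansion is more hands-on and self-contained, but the step where you write $H(r)=F(r)/r+\tfrac1r\int_0^rG-\int_0^rG/s$ ``up to an additive constant'' deserves care: when $n-k=1$ the integral $\int_0^rG(s)/s\,ds=\int_0^r\nu(dd^cT,\varphi,B,s)/s\,ds$ need not converge (this is precisely Condition~$(C)$, which Proposition~\ref{prop2} does \emph{not} assume). Your argument can be repaired by keeping a fixed lower bound $r_0>0$ in the Lelong--Jensen formula and checking that the resulting $r_0$-dependent terms cancel in $B'(r)$, but this is exactly the kind of bookkeeping that the paper's $\epsilon$-regularisation avoids from the outset.
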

    \begin{proof}
        Let $\epsilon >0$. If we replace $\varphi$ by $\varphi_\epsilon=\varphi+\epsilon$ and $\psi$ by $\psi_\epsilon=(\varphi+\epsilon)^p$ in the Lelong-Jensen formula, we obtain for every  $0<r_1< \epsilon<r=r_2<R(\varphi)$,
        \begin{equation}\label{equ2.2}
         \begin{array}{l}
            \nu(T,\varphi_\epsilon,B,r) \\
            = \ds \int_{B_{\varphi_\epsilon}(\epsilon,r)\times B}T\w\alpha_{\varphi_\epsilon}^{n-k}\w\beta_v^m \ds-\frac 1{r^{n-k}}\int_\epsilon^r s^{n-k-1}\nu(dd^cT,\varphi_\epsilon,B,s)ds \\
            \hfill\ds+\int_\epsilon^r\frac{\nu(dd^cT,\varphi_\epsilon,B,s)}s ds
        \end{array}
        \end{equation}
        Using $dd^cT$ instead of $T$, Equality (\ref{equ2.2}) becomes
        \begin{equation}\label{equ2.3}
            \begin{array}{lcl}
              \nu(dd^cT,\psi_\epsilon,B,r^p) & =& \ds \int_{B_{\psi_\epsilon}(\epsilon^p,r^p)\times B} dd^cT\w\alpha_{\psi_\epsilon}^{n-k-1}\w\beta_v^m\\
              & = & p^{n-k-1}\ds \int_{B_{\varphi_\epsilon}(\epsilon,r)\times B}dd^cT\w\alpha_{\varphi_\epsilon}^{n-k-1}\w\beta_v^m\\
              &=&p^{n-k-1}\nu(dd^cT,\varphi_\epsilon,B,r)
            \end{array}
        \end{equation}
        Let now
            \begin{equation}\label{equ2.4}
                \begin{array}{l}
                    \nu(T,\psi_\epsilon,B,r^p)\\
                    = \ds \int_{B_{\psi_\epsilon}(\epsilon^p,r^p)\times B} T\w\alpha_{\psi_\epsilon}^{n-k}\w\beta_v^m + \int_{\epsilon^p}^{r^p}\left(1-\frac{s^{n-k}}{r^{p(n-k)}}\right) \frac{\nu(dd^cT,\psi_\epsilon,B,s)}s ds\\
                    = \ds \int_{B_{\psi_\epsilon}(\epsilon^p,r^p)\times B} T\w\alpha_{\psi_\epsilon}^{n-k}\w\beta_v^m + p\int_\epsilon^r\left(1-\frac{s^{p(n-k)}}{r^{p(n-k)}}\right) \frac{\nu(dd^cT,\psi_\epsilon,B,s^p)}s ds\\
                    = \ds p^{n-k}\int_{B_{\varphi_\epsilon}(\epsilon,r)\times B}T\w\alpha_{\varphi_\epsilon}^{n-k}\w\beta_v^m+ p^{n-k}\int_\epsilon^r\left(1-\frac{s^{p(n-k)}}{r^{p(n-k)}}\right) \frac{\nu(dd^cT,\varphi_\epsilon,B,s)}s ds\\
                    =\ds p^{n-k}\left\{\nu(T,\varphi_\epsilon,B,r)- \int_\epsilon^r\left(1-\frac{s^{n-k}}{r^{n-k}}\right) \frac{ \nu(dd^cT,\varphi_\epsilon,B,s)}sds\right\} \\
                    \ds \hfill + p^{n-k}\int_\epsilon^r\left(1-\frac{s^{p(n-k)}}{r^{p(n-k)}}\right) \frac{\nu(dd^cT,\varphi_\epsilon,B,s)}s ds\\
                    =\ds p^{n-k}\left(\nu(T,\varphi_\epsilon,B,r)+\int_\epsilon^r \frac{\nu(dd^cT,\varphi_\epsilon,B,s)}s \left(\frac{s^{n-k}}{r^{n-k}} -\frac{s^{p(n-k)}}{r^{p(n-k)}}\right)ds \right).
                \end{array}
            \end{equation}

      here we have used  successively Equality (\ref{equ2.2}) with $\psi_\epsilon$ instead of $\varphi_\epsilon$, then the change of variable $s\mapsto s^p$, next Equality (\ref{equ2.3}) and finally  equality (\ref{equ2.2}).\\
      When $\epsilon\to0$, Equality (\ref{equ2.4})  gives Equality (\ref{equ2.1}).\\

      Thanks to Equality (\ref{equ2.1}), for every $r \in]0,R(\varphi)[$, we have
      $$\ds\nu(T,\varphi,B,r)-\frac1{p^{n-k}}\nu(T,\varphi^p,B,r^p)
        =-\ds\int_0^r\nu(dd^cT,\varphi,B,s) \left(\frac{s^{n-k-1}}{r^{n-k}} - \frac{s^{p(n-k)-1}}{r^{p(n-k)}}\right)ds.$$
      For a technical  reason, we distinguish  the two cases:
      \begin{itemize}
            \item \textit{First case $dd^cT\geq0$:} The current $dd^cT$ is positive closed, so $\nu(dd^cT,\varphi,B,.)$ is  a positive increasing  function. Hence
                $$\begin{array}{lcl}
                    \ds\nu(dd^cT,\varphi,B)\frac{p-1}{p(n-k)}&\leq&\ds \int_0^r\nu(dd^cT,\varphi,B,s) \left(\frac{s^{n-k-1}}{r^{n-k}} - \frac{s^{p(n-k)-1}}{r^{p(n-k)}}\right)ds\\
                    &\leq& \ds \nu(dd^cT,\varphi,B,r)\frac{p-1}{p(n-k)}.
                \end{array}$$
                It follows that
                $$\begin{array}{lcl}
                    \ds- \frac{p-1}{p(n-k)}\nu(dd^cT,\varphi,B,r)&\leq&\ds\nu(T,\varphi,B,r)-\frac1{p^{n-k}} \nu(T,\varphi^p,B,r^p)\\
                    &\leq&\ds - \frac{p-1}{p(n-k)}\nu(dd^cT,\varphi,B).
                \end{array}$$
            \item \textit{Second case $dd^cT\leq0$:}
            The current $dd^cT$ is negative and closed, so $\nu(dd^cT,\varphi,B,.)$ is a negative  decreasing  function. A similar computation proves that we have
                $$\begin{array}{lcl}
                    \ds- \frac{p-1}{p(n-k)}\nu(dd^cT,\varphi,B) &\leq&\ds\nu(T,\varphi,B,r)-\frac1{p^{n-k}} \nu(T,\varphi^p,B,r^p)\\
                    &\leq&\ds - \frac{p-1}{p(n-k)}\nu(dd^cT,\varphi,B,r).
                \end{array}$$
        \end{itemize}
    In both cases, the two terms of the right hand and the left hand have the same limit when $r \to0^+$. This completes the proof of the proposition.
    \end{proof}
    \begin{rem}
        If the current $T$ is positive psh (resp. prh satisfying Condition $(C)$) then
        $$\nu(T,\varphi^p,B) = p^{n-k}\nu (T,\varphi,B).$$
        The current $T_2$ defined on  $\Bbb C^3$ by $$T_2(z,t):=d\log(|z_1|^2+|z_2|^2)\w d^c\log(|z_1|^2+|z_2|^2)$$ shows that this equality is not true if Condition $(C)$ is not satisfied; so the second equality in Proposition \ref{prop2} is sharp.
    \end{rem}

\begin{theo}\label{the2}
         Let $T$ be a positive psh or prh current of  bidegree $(k,k)$ on $\Omega$ and $\varphi, \  \psi$ two psh functions such that
         $$ \ds\liminf_{\varphi (z)\to 0} \frac {\log \psi (z)} {\log \varphi (z)} \geq \ell .$$
         Assume that $T,\ \psi$ and $B$ satisfy Condition $(C)$. Then $\nu(T,\psi,B) \geq \ell^{n-k}\nu(T,\varphi,B)$.\\
         In particular, if $\log \psi (z) \sim \ell \log \varphi(z)$ when $\varphi(z) \in \mathscr V(0)$ then $\nu (T,\psi,B) = \ell^{n-k }\nu (T,\varphi,B).$
     \end{theo}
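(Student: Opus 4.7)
The plan is to reduce the inequality to the scaling identity of Proposition~\ref{prop2} through a monotonicity (comparison) principle. First I translate the hypothesis: fix any $\ell' \in (0,\ell)$. Since $\log\varphi(z)\to-\infty$ as $\varphi(z)\to 0^+$, the liminf condition yields $r_0>0$ such that $\log\psi(z) \le \ell' \log\varphi(z)$, hence $\psi(z) \le \varphi(z)^{\ell'}$, on $\{\varphi<r_0\}$; the direction of the inequality flips because $\log\varphi$ is negative near the zero set.

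The heart of the argument is a comparison lemma: for two admissible weights $\psi_1 \le \psi_2$ on a neighborhood of $\{\varphi = 0\}$, one has $\nu(T,\psi_1,B) \ge \nu(T,\psi_2,B)$ when $T$ is positive psh, or positive prh satisfying Condition $(C)$ with respect to $\psi_1$. I would establish this by applying the Lelong-Jensen formula of Proposition~1 to a regularized maximum of the form $\max_\eta(\psi_1,(1-\delta)\psi_2)$---which coincides with $\psi_1$ close to the zero set and with a slight translate of $\psi_2$ away from it---integrating by parts as in Demailly's classical comparison theorem for generalized Lelong numbers, and letting $\eta,\delta\to 0^+$. Condition $(C)$ serves to control the $dd^cT$ contribution in the prh case. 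This comparison step is the main obstacle of the proof.

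Applying this lemma to $\psi_1 = \psi$ and $\psi_2 = \varphi^{\ell'}$ gives $\nu(T,\psi,B) \ge \nu(T,\varphi^{\ell'},B)$. The proof of Proposition~\ref{prop2} extends verbatim to any real exponent $p>1$ (only the plurisubharmonicity of $\log\varphi$ enters), and the Remark following Proposition~\ref{prop2} yields $\nu(T,\varphi^{\ell'},B) = (\ell')^{n-k}\nu(T,\varphi,B)$. Letting $\ell'\nearrow \ell$ produces the desired inequality $\nu(T,\psi,B)\ge \ell^{n-k}\nu(T,\varphi,B)$.

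For the particular case $\log\psi \sim \ell\log\varphi$ near $\{\varphi=0\}$, the liminf hypothesis holds symmetrically with $(\varphi,\psi)$ interchanged and $\ell$ replaced by $1/\ell$; applying the inequality just proved in both directions then yields the claimed equality $\nu(T,\psi,B) = \ell^{n-k}\nu(T,\varphi,B)$.
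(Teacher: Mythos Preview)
Your overall strategy---reduce to a monotone comparison $\psi\le\varphi^{\ell'}$, invoke a comparison lemma, then apply Proposition~\ref{prop2} and let $\ell'\nearrow\ell$---is sound in spirit and follows Demailly's classical route. The paper, however, takes a different and more self-contained path: after first using Proposition~\ref{prop2} to reduce to the case $\ell\ge 2$, it works with the \emph{additive} perturbation $\Psi_\epsilon=\psi+\epsilon\varphi^\ell$ rather than a regularized maximum. Since $\Psi_\epsilon\sim\epsilon\varphi^\ell$ near $\{\varphi=0\}$ and $T\wedge(dd^c\Psi_\epsilon)^{n-k}\ge T\wedge(dd^c\epsilon\varphi^\ell)^{n-k}$ by positivity, the increasing function $g_\epsilon$ of Theorem~\ref{the1} gives $g_\epsilon(r)\ge\nu(T,\varphi^\ell,B)$ directly, and one lets $\epsilon\to0$ then $r\to0$. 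This avoids your comparison lemma entirely.

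There are two concrete gaps in your proposal. First, your ``comparison lemma'' is precisely the hard part and you only sketch it; in particular, your description of $\max_\eta(\psi_1,(1-\delta)\psi_2)$ as coinciding with $\psi_1$ near the zero set is backwards (if $\psi_1\le\psi_2$ there, the max picks up $(1-\delta)\psi_2$, not $\psi_1$), so the interpolation must be set up in the log-weights as in Demailly, and its compatibility with the paper's $\mathcal C^2$ framework and with the $dd^cT$ terms under Condition~$(C)$ remains to be checked. Second, Proposition~\ref{prop2} does \emph{not} extend verbatim to exponents $p\in(1,2)$: the proof uses that $\varphi^p$ is $\mathcal C^2$, which fails at $\{\varphi=0\}$ when $p<2$ since $\partial^2(\varphi^p)$ involves $\varphi^{p-2}$. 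The paper sidesteps this by first replacing $\psi$ by $\psi^p$ with $p$ large, thereby pushing the effective exponent above $2$; you would need the same reduction rather than the claimed extension.
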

     This Theorem  is known with "comparison theorem"; a such theorem was proved by Demailly in case of positive closed currents. This result allows us to prove the independence of directional Lelong-Demailly numbers with a  changement of the system of coordinates.

     \begin{proof}
        Replacing $\psi$ by $\psi^p$ and $\ell $ by $(p-1)\ell $ where $p\geq2$ large enough, we can assume that
        $$\liminf_{\varphi (z)\to 0} \frac {\log \psi (z)} {\log \varphi(z)}> \ell\geq 2.$$
        Hence we have $$\lim_ {\varphi(z) \to 0} \frac {\psi (z)} {\varphi(z)^\ell} = 0.$$
        So the function $\Psi_ \epsilon = \psi + \epsilon \varphi^\ell \underset {\varphi (z) \in \mathscr V(0)} \sim \epsilon \varphi^\ell $ for every $\epsilon> 0$. As $\ell \geq2$, $\Psi_ \epsilon$ is $\mathcal{C}^2$. Thanks to dominated convergence theorem,
        $$ \ds \lim_{\epsilon \to 0} \frac1{r^{n-k}} \int_ {\{\Psi_\epsilon<r\}\times B} T \w\beta_{\Psi_\epsilon}^{n-k}\w\beta_v^m =\frac{1}{r^{n-k}} \int_{\{\psi<r\}\times B} T \w\beta_{\psi}^{n-k}\w\beta_v^m$$ for every $r\in]0,R(\psi)[$.

        If the current $T$ is positive prh then the  function $g_\epsilon$ defined by
        $$g_\epsilon(r)=\nu(T,\Psi_\epsilon,B,r)+\ds\int_0^r\left(\frac{s^{n-k}}{r^{n-k}}-1\right) \frac{\nu(dd^cT,\Psi_\epsilon,B,s)}{s}ds\geq0$$ is increasing, thus
        $$ \begin{array}{lcl}
                g_\epsilon(r) &=& \ds\frac1{r^{n-k}} \int_ {\{\Psi_\epsilon <r\}\times B} T \w\beta_{\Psi_\epsilon}^{n-k}\w\beta_v^m+\ds\int_0^r\left(\frac{s^{n-k}}{r^{n-k}}-1\right) \frac{\nu(dd^cT,\Psi_\epsilon,B,s)}{s}ds  \\
                &\geq & \ds \lim_{\rho\to 0} \frac1 {\rho^{n-k}} \int_{\{\Psi_\epsilon <\rho\}\times B} T \w\beta_{\Psi_\epsilon}^{n-k}\w\beta_v^m \\
                &=&  \ds \lim_{\rho \to 0} \frac1{\rho^{n-k}} \int_ {\{\epsilon\varphi^\ell <\rho\}\times B} T \w\beta_{\Psi_\epsilon}^{n-k}\w\beta_v^m  \\
                & \geq & \ds \lim_{\rho \to 0} \frac1{\rho^{n-k}} \int_{\{\epsilon\varphi^\ell <\rho\}\times B} T \w\beta_{\epsilon\varphi^\ell}^{n-k}\w\beta_v^m = \nu (T, \varphi^\ell,B).
        \end{array}$$
        Where we use the fact that $\Psi_\epsilon \sim \epsilon \varphi^\ell$ and $T \w (dd^c(\psi + \epsilon\varphi^\ell))^{n-k} \geq T \w(\epsilon dd^c(\varphi^\ell))^ {n-k}.$\\
        The same  result can be obtained in the case of positive psh current by using $g_\epsilon(r)=\nu(T,\Psi_\epsilon,B,r).$\\

        If $\epsilon\to 0$, we obtain $\nu(T,\psi,B,r)\geq \nu(T,\varphi^\ell,B)$. Thus, if $r\to 0$,  $\nu(T,\psi,B) \geq \nu(T,\varphi^\ell,B)$.\\

        In particular if  $\log \psi(z)\sim\ell\log \varphi(z)$, for  $\epsilon>0$ (with $\ell(1-\epsilon)\geq2$), there exists $\eta>0$ such that if  $|\varphi(z)|<\eta$, then $\varphi(z)^{\ell(1+\epsilon)}\leq \psi(z)\leq \varphi(z)^{\ell(1-\epsilon)}$. So we can apply the previous inequality to obtain
        $$\ell^{n-k}(1+\epsilon)^{n-k}\nu(T,\varphi,B)\leq\nu(T,\psi,B)\leq \ell^{n-k}(1-\epsilon)^{n-k}\nu(T,\varphi,B).$$   When  $\epsilon\to0$, we obtain $\nu(T,\psi,B)=\ell^{n-k}\nu(T,\varphi,B).$
    \end{proof}

\section{Main result}

        The main result of this paper is the following theorem:
      \begin{theo}\label{the3}
            Let $T$ be a positive prh current of bidegree  $(k,k)$,  $0\leq k\leq n$ on $\Omega$. We assume that  $T$, $B_0$ satisfy Condition $(C)$. Then there exists an open subset $V\subset B_0$ and a function $f\in L^1_{loc}(V)$ such that
            $$\ds \nu(T,B)=\lim_{r\to0} \frac{1}{r^{2(n-k)}}\int_{\{|z|<r\}\times B}T\w \omega_z^{n-k}\w \omega_t^{m}=\int_Bf(t)\omega_t^m$$ for every open ball $B\subset\subset V$.
      \end{theo}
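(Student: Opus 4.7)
The plan is to reduce the directional Lelong number to a slice-level Lelong number along the second projection $\pi_2:\Omega_1\times\Omega_2\to\Omega_2$. For a.e. $t\in\Omega_2$ the slice $T_t:=\langle T,\pi_2,t\rangle$ is a positive current of bidegree $(k,k)$ on $\Omega_1$, and the candidate density is $f(t):=\lim_{r\to 0^+}\nu_{T_t}(r)$, where $\nu_{T_t}(r):=r^{-2(n-k)}\int_{|z|<r}T_t\w\omega_z^{n-k}$.

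First I would establish the slicing-Fubini identity. Since slicing extracts the pure-$z$ component of $T$, approximating $T$ by a smooth regularizing sequence $T_j$ and passing to the limit off a null set yields $(dd^cT)_t=dd^cT_t$ for a.e. $t$, so the slice of a positive prh current is itself positive prh. For every ball $B\subset\subset\Omega_2$ and small $r,s>0$ the slicing-Fubini identity then reads
\begin{equation*}
\nu(T,B,r)=\int_B\nu_{T_t}(r)\,\omega_t^m(t),\qquad \nu(dd^cT,B,s)=\int_B\nu_{dd^cT_t}(s)\,\omega_t^m(t),
\end{equation*}
where $\nu_{dd^cT_t}(s):=s^{-2(n-k-1)}\int_{|z|<s}dd^cT_t\w\omega_z^{n-k-1}$.

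Next, by Tonelli applied to the non-negative function $(s,t)\mapsto-\nu_{dd^cT_t}(s)/s$, Condition $(C)$ for $(T,B_0)$ transfers to Condition $(C)$ for $T_t$ for a.e. $t\in B_0$. Theorem \ref{the1} applied to each such slice (in the case $m=0$ with $\varphi(z)=|z|^2$) then guarantees that $f(t)=\lim_{r\to 0^+}\nu_{T_t}(r)$ exists for a.e. $t\in B_0$, and that the auxiliary function $g_t(r)$ constructed exactly as in the proof of Theorem \ref{the1} but expressed in the classical variable $r=|z|$ is positive, increasing in $r$, and satisfies $g_t(r)\downarrow f(t)$ as $r\to 0^+$.

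Integrating $g_t(r)$ over $B$ and using Tonelli term by term with the two slicing-Fubini identities above gives an equality of the form $\int_B g_t(r)\,\omega_t^m=\nu(T,B,r)+\bigl(\text{corrective integral in } s\in(0,r)\bigr)$, whose right-hand side tends to $\nu(T,B)$ as $r\to 0^+$ by exactly the computation in the proof of Theorem \ref{the1}. Monotone convergence, with the integrable majorant $g_t(r_0)$ for fixed small $r_0$, forces $\int_B g_t(r)\,\omega_t^m\to\int_B f(t)\,\omega_t^m$, so $\nu(T,B)=\int_B f(t)\,\omega_t^m$ for every $B\subset\subset V$, where $V\subset B_0$ is the open set on which the slicing machinery and the slice-level Condition $(C)$ are both valid; $f\in L^1_{loc}(V)$ follows at once from the finiteness of $\nu(T,B)$. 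The main technical obstacle is the commutation $(dd^cT)_t=dd^cT_t$ modulo a null set for positive prh (as opposed to positive closed or positive psh) currents, which requires a smoothing argument preserving positivity and the prh property together with careful handling of the exceptional null sets in $\Omega_2$.
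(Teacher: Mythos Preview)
Your slicing approach is genuinely different from the paper's. The paper never forms fibrewise slices $T_t$; for $1\le k\le n-1$ it introduces Siu's auxiliary coordinates $w_I$, proves uniform mass bounds for $T_j\wedge\theta_z^p\wedge\omega_z^{n-k-p}\wedge\omega_I^m$ (Proposition~\ref{prop3}), extracts weak limits $T^{(p)}$ and $S^{(p)}$ of $T_j\wedge\theta_z^p$ and $\log|z|\,dd^cT_j\wedge\theta_z^p$ on an explicit open set $U$, establishes the identity $\nu(T,B)=\nu(T^{(1)},B)-2\nu(S^{(0)},B)$ (Lemma~\ref{lem4}), and then iterates down to the bidegree-$(n,n)$ case already handled via the support theorem. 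The open set $V=U\cap(\{0\}\times\Bbb C^m)$ is visibly open by construction, and all estimates are global on $U$, so no fibrewise control is ever needed.

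Your proposal has a genuine gap which you name yourself but do not close: the commutation $(dd^cT)_t=dd^cT_t$. Federer-type slicing commutes with $d$ for \emph{normal} currents, but a positive prh current only guarantees that $T$ and $dd^cT$ have measure coefficients, not $dT$ or $d^cT$; the naive smoothing computation of $dd^c(T\wedge\pi_2^*\chi_\epsilon\,\omega_t^m)$ produces cross terms in $dT\wedge d^c\chi_\epsilon$ with no evident control in the limit. Without this commutation you cannot conclude that $T_t$ is prh, and hence cannot invoke Theorem~\ref{the1} on the slice to produce $f(t)$. Saying it ``requires a smoothing argument preserving positivity and the prh property together with careful handling of the exceptional null sets'' is a description of the difficulty, not a resolution; it is exactly this obstruction that drives the paper to work with global weak limits of the regularizations $T_j\wedge\theta_z^p$ rather than fibrewise. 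A secondary point: your description of $V$ as the set where slicing and slice-level Condition~$(C)$ hold yields only a full-measure subset of $B_0$, not an open one; once $f\in L^1_{loc}(B_0)$ is in hand you should simply take $V=B_0$ and verify the integral identity for \emph{every} ball $B\subset\subset B_0$.
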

      The analogous of this result in the case of positive psh currents was proved by  Alessandrini and Bassanelli \cite{Al-Ba}. For $k=0$ or $k=n$ the proof is simple, indeed
      \begin{itemize}
        \item [$\bullet$] If $k=0$ then $h:=T$ is a positive prh function and $$ H(z):=\int_B h(z,t)\omega_t^m $$ is also prh, so $\nu(H,z)=H(z)$. Therefore
            $$\begin{array}{lcl}
                \ds \nu(h,B)&=&\ds\lim_{r\rightarrow 0}\frac{1}{r^{2n}}\int_{\{|z|<r\}\times B}h(z,t)\omega_z^n\w\omega_t^m= \lim_{r\rightarrow 0}\frac{1}{r^{2n}}\int_{|z|<r}H(z)\omega_z^n\\
                &=&\ds\int_B h(0,t)\omega_t^m.
              \end{array}$$
        \item [$\bullet$] If $k=n$, one has $$\nu(T,B)=\ds\lim_{r\rightarrow0} \int_{\{|z|<r\}\times B}T\w\omega_t^{m}.$$ Let $\Theta=\1_{\Omega \smallsetminus Y}T$, where $Y=\{0\}\times \Bbb C^m\cap\Omega$. The subset $Y$ is analytic in $\Omega$ of codimension $k=n$. Since $||T||_K$ is finite for every compact subset $K\subset\Omega$, the current $\Theta$ has a locally finite mass in neighborhood of every point of $Y$. As $T$ is a  positive prh current, then $T$ and $\Theta$ are $\Bbb C-$flat currents. So $T=\1_YT+\widetilde{\Theta}$, where $\widetilde{\Theta}$ is the trivial extension of $\Theta$ by 0 across $Y$. Thanks to the support theorem, there exists a positive prh function $f$ on $Y$ such that $\1_YT=f[Y]$. Hence $T=f[Y]+\widetilde{\Theta}$ and it follows that
            $$\ds\int_{\{|z|<r\}\times B}T\w\omega_t^m=\int_Bf(t)\omega_t^m+\int_{\{|z|<r\}\times B}\widetilde{\Theta}\w\omega_t^m$$
            for every open ball $B$ of $\Omega_2$ and $r>0$ small enough. But $$\int_{\{|z|<r\}\times B}\widetilde{\Theta}\w\omega_t^m\leq\int_{\{|z|<r\}\times B}\widetilde{\Theta}\w\omega^m=||\widetilde{\Theta}||(\{|z|<r\}\times B)$$ and
            $$\ds\lim_{r\to 0}||\widetilde{\Theta}||{(\{|z|<r\}\times B)}=||\widetilde{\Theta}||(B)=0$$ which gives the result.
    \end{itemize}
\subsection{Preliminary lemmas}
        To  prove the main result in case $1\leq k \leq n-1$,  we need a special transformation of coordinates (introduced by Siu \cite{Siu}) given by  $w=w(z,t)=(w_1,....,w_{m+n})$  such that $(z,w_I)=(z_1,....,z_n,w_{i_1},....,w_{i_m})$ form a system of  coordinates of $\Bbb C^N$ for every $1\leq i_1<i_2<.....<i_m\leq m+n$. We denote by $\omega_{I}:=dd^c|w_{I}|^2$.\\
        The first lemma is a version of Lelong-Jensen formula adapted to $\omega_I$:
        \begin{lem}\label{lem1}(See \cite{Al-Ba})
            Let $\psi$ be a smooth $(k,k)$-form on $\Omega$. For any increasing $m-$multiindex $I$ and every $ 1 \leq p \leq n-k,\ 0 \leq q < p,\  0 < r,c $, such that $\{(z,t)\in\Omega;\ |z|<r,\ |w_I|<c\}\subset\subset\Omega$, we have
            $$\begin{array}{ll}
              &\ds \int_{\{|z|<r,\ |w_I|<c\}}\  \psi\w{\theta_z}^p\w{{\omega_z}^{n-k-p}}\w\omega_I^m  \\
              =&\ds \frac{1}{r^{2(q+1)}}\int_{\{|z|<r,\ |w_I|<c\}}\     \psi\w{\theta_z}^{p-q-1}\w\omega_z^{n-k-p+q+1}\w\omega_I^m \\
              &\ds -\int_{0}^{r}\  \frac{1}{ s^{2(q+1)}} 2sds\ \int_{\{|z|<s,\ |w_I|<c\}}\ dd^c\psi\w{\theta_z}^{p-q-1}\w{\omega_z}^{n-k-p+q}\w{\omega_I}^m\\
              & \ds+\int_0^r\ \frac{1}{ r^{2(q+1)}}2sds\ \int_{\{|z|<s,\ |w_I|<c\}}\ dd^c\psi\w\theta_z^{p-q-1}\w\omega_z^{n-k-p+q}\w\omega_I^m
            \end{array}$$
where $\theta_z:=dd^c\log|z|^2$.
        \end{lem}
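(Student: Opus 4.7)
The plan is to apply the Lelong--Jensen formula (Proposition 1) in the coordinate system $(z,w_I)$ provided by the Siu transformation, in which the domain $\{|z|<r,\ |w_I|<c\}$ becomes a product, and then to change the radial variable.

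In the coordinates $(z,w_I)\in\Bbb C^n\times\Bbb C^m$, the relevant domain is the product $\{|z|<r\}\times\{|w_I|<c\}$, so Proposition 1 applies with $\varphi(z)=|z|^2$, $v(w_I)=|w_I|^2$, $B=\{|w_I|<c\}$, and with the smooth $(k,k)$-form $\psi$ playing the role of the regularization $T_j$. Note that $\beta_\varphi=\omega_z$, $\alpha_\varphi=dd^c\log|z|^2=\theta_z$, $\beta_v=\omega_I$, and $B_\varphi(\rho)=\{|z|<\sqrt{\rho}\}$; in particular the ball of radius $r$ in the usual sense corresponds to the choice $r_2=r^2$ in Proposition 1.

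I would then apply Proposition 1 with $r_2=r^2$ and $0<r_1<r_2$, rearrange it to isolate the quantity $\int\psi\w\theta_z^p\w\omega_z^{n-k-p}\w\omega_I^m$, and pass to the limit $r_1\to 0^+$. Two auxiliary contributions need to be shown to vanish. The boundary term
$$\frac{1}{r_1^{q+1}}\int_{\{|z|<\sqrt{r_1},\ |w_I|<c\}}\psi\w\theta_z^{p-q-1}\w\omega_z^{n-k-p+q+1}\w\omega_I^m$$
is estimated by the rescaling $z=\sqrt{r_1}\,z'$: since $\theta_z$ is scale invariant, $\omega_z$ rescales by the factor $r_1$, and only the component of $\psi$ of maximal bidegree in $z$ contributes to a top-degree form, yielding a factor $r_1^k$. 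Consequently the integral is of order $r_1^{n-p+q+1}$, and after division by $r_1^{q+1}$ the remaining power $r_1^{n-p}$ tends to $0$ in the intended range ($p\leq n-k$ with $k\geq 1$, as in the application to Theorem \ref{the3}). The remainder
$$\int_0^{r_1}\!\left(\frac{1}{r_1^{q+1}}-\frac{1}{r^{2(q+1)}}\right)ds\int_{\{|z|<\sqrt{s},\ |w_I|<c\}}dd^c\psi\w\theta_z^{p-q-1}\w\omega_z^{n-k-p+q}\w\omega_I^m$$
is handled by the same scaling argument applied to the smooth form $dd^c\psi$.

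Finally, in the surviving integral $\int_0^{r^2}\bigl(s^{-(q+1)}-r^{-2(q+1)}\bigr)\,ds\int_{B_\varphi(s)\times B}dd^c\psi\w\cdots$, I would perform the substitution $s=\sigma^2$, $ds=2\sigma\,d\sigma$: the new variable $\sigma$ ranges over $(0,r)$, $B_\varphi(s)=\{|z|<\sqrt{s}\}$ becomes $\{|z|<\sigma\}$, and $s^{q+1}=\sigma^{2(q+1)}$. Splitting the difference $\tfrac{1}{s^{q+1}}-\tfrac{1}{r^{2(q+1)}}$ and relabeling $\sigma$ as $s$ reproduces exactly the two integrals appearing on the right-hand side of the lemma. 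The main obstacle is not conceptual but bookkeeping: one must carefully justify the vanishing of the boundary term and of the $\int_0^{r_1}$ contribution as $r_1\to 0^+$, which is a routine rescaling argument relying on the scale invariance of $\theta_z$ and the smoothness of $\psi$ and $dd^c\psi$.
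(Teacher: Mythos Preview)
The paper does not supply a proof of this lemma; it simply refers the reader to Alessandrini--Bassanelli \cite{Al-Ba}. Your derivation---specialize Proposition~1 to $\varphi(z)=|z|^2$ and $v(w_I)=|w_I|^2$ in the Siu coordinates $(z,w_I)$, rearrange to isolate the $\theta_z^p$ term, let $r_1\to0^+$, and perform the quadratic substitution $s\mapsto s^2$---is correct and reproduces the stated identity line by line.

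Two small remarks. First, Proposition~1 is stated for the regularizations $T_j$ of a positive psh or prh current, but its proof is a pure Stokes-theorem computation for smooth forms; applying it verbatim to an arbitrary smooth $(k,k)$-form $\psi$ is therefore legitimate, as you implicitly assume. Second, your scaling argument gives the boundary term the order $r_1^{\,n-p}$, so its vanishing requires $p<n$; since $p\le n-k$, this holds whenever $k\ge1$, which is exactly the range used in the proof of Theorem~\ref{the3} (the cases $k=0$ and $k=n$ being treated separately there). The $\int_0^{r_1}$ remainder is even better behaved: the same scaling gives it order $r_1^{\,n-p+1}$, which tends to $0$ for every $k\ge0$.
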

        The following  proposition is the crucial point in the proof of the main result:
        \begin{prop}\label{prop3}
            Let $T$ be a positive current on $\Omega$ satisfying Condition $(C)$ and $(T_j)_{j\in\Bbb N}$ its regularizing sequence. Then there exist $0 < r_0,c $ such that for any increasing $m-$multiindex $I$ and every $ 0 \leq p \leq n-k, \ 0<r\leq r_0$ we have  $\{(z,t)\in\Omega;\ |z|<r,\ |w_I|<c\}\subset\{\{|z|<2r\}\times B_0\}\subset\subset\Omega$ and
            $$ \ds \sup_j\int_{\{|z|<r,\ |w_I|<c\}}\  T_j\w\theta_z^p\w\omega_z^{n-k-p}\w\omega_I^m<+\infty.$$
            
        \end{prop}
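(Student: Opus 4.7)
The plan is to reduce the integral involving $\theta_z^p$ to one involving only $\omega_z$ via the Lelong--Jensen identity of Lemma \ref{lem1} (applied to $\psi = T_j$), and then to bound the resulting pieces uniformly in $j$.

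First I fix the geometric setup. Since $(z,w_I)$ is a linear coordinate system on $\Bbb C^N$ for every $m$-multi-index $I$, the variable $t$ is a linear function of $(z,w_I)$, so $|t|\leq M(|z|+|w_I|)$ for some $M>0$ (taking the maximum over the finitely many multi-indices $I$). Choosing $c$ small enough, and then $r_0$ smaller, yields the required inclusion $\{|z|<r,\ |w_I|<c\} \subset \{|z|<2r\}\times B_0 \subset\subset \Omega$ for all $I$ and all $0<r\leq r_0$. The case $p=0$ is then immediate: $\omega_z^{n-k}\w\omega_I^m \leq C\,\omega^{N-k}$ as positive constant-coefficient forms on $\Bbb C^N$, so the integral is bounded by $C$ times the trace mass of $T_j$ on $\{|z|<r\}\times B_0$, which is uniformly bounded in $j$ by weak convergence $T_j\to T$.

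For $1\leq p\leq n-k$, I apply Lemma \ref{lem1} with $\psi=T_j$ and $q=p-1$. Since $T$ is plurisuperharmonic (so $-dd^c T$ is positive closed and regularization preserves this), the identity reads
\begin{align*}
\int_{\{|z|<r,\ |w_I|<c\}} T_j\w\theta_z^p\w\omega_z^{n-k-p}\w\omega_I^m
&= \frac{1}{r^{2p}}\int_{\{|z|<r,\ |w_I|<c\}} T_j\w\omega_z^{n-k}\w\omega_I^m \\
&\quad + \int_0^r \Bigl(\frac{2}{s^{2p-1}}-\frac{2s}{r^{2p}}\Bigr)\sigma_j^I(s)\,ds,
\end{align*}
where $\sigma_j^I(s):=\int_{\{|z|<s,\ |w_I|<c\}}(-dd^c T_j)\w\omega_z^{n-k-1}\w\omega_I^m\geq 0$. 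The principal term is controlled as in the $p=0$ case (after absorbing the extra $r^{-2p}$). For the second term, the map $s\mapsto \sigma_j^I(s)/s^{2(n-k-1)}$ is the directional Lelong-type density of the positive closed current $-dd^c T_j$ in the coordinate system $(z,w_I)$, hence increasing in $s$ (Lelong--Jensen for positive closed currents) and uniformly bounded on $(0,r_0]$ by weak convergence of the trace measures of $-dd^cT_j$ to $-dd^cT$. For $p<n-k$ this already suffices, since $s^{2(n-k-1)}/s^{2p-1}=s^{2(n-k-p)-1}$ is integrable near $0$.

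In the critical case $p=n-k$, the weight reduces to $1/s$ and mere boundedness of the Lelong-type density is not enough. Here Condition $(C)$ intervenes decisively: it forces $\sigma_j^I(s)/s^{2(n-k-1)}\to 0$ as $s\to 0^+$ and supplies the integrability of $s\mapsto s^{-1}\sigma_j^I(s)/s^{2(n-k-1)}$ near $0$. The main obstacle is precisely this last bridge: Condition $(C)$ is formulated in the original $(z,t)$-coordinates with $\omega_t^m$, while $\sigma_j^I$ involves $\omega_I^m$ in the coordinates $(z,w_I)$. The coordinate-independence of directional Lelong numbers for positive closed currents (Theorem \ref{the2} applied to $-dd^c T$, or its classical Alessandrini--Bassanelli predecessor) provides the comparison, and weak convergence of the trace measures of $-dd^c T_j$ then propagates the bound uniformly to the regularizing sequence.
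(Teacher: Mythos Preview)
Your treatment of $p=0$ and $1\leq p<n-k$ is correct. For $1\leq p<n-k$ your route differs slightly from the paper's: the paper rewrites the $s^{1-2p}$-weighted integral of $-dd^cT_j$ back in terms of $\theta_z^p$ (via Lelong--Jensen for the closed current $-dd^cT_j$) and then quotes the Alessandrini--Bassanelli bound $\sup_j\int -dd^cT_j\wedge\theta_z^p\wedge\omega_z^{n-k-1-p}\wedge\omega_I^m<\infty$, whereas you use monotonicity of the directional density $\sigma_j^I(s)/s^{2(n-k-1)}$ and integrate $s^{2(n-k-p)-1}$. Both are fine.

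The case $p=n-k$ has a genuine gap. You need $\sup_j\int_0^r s^{1-2(n-k)}\sigma_j^I(s)\,ds<\infty$, and you propose to get this from Condition~$(C)$ via ``coordinate-independence of directional Lelong numbers''. But Theorem~\ref{the2} compares $\nu(\cdot,\psi,B)$ with $\nu(\cdot,\varphi,B)$ for different \emph{weights} $\psi,\varphi$ and the \emph{same} $B$; it says nothing about replacing $\omega_t^m$ by $\omega_I^m$. The Alessandrini--Bassanelli independence statement concerns only the \emph{limit} $\nu(-dd^cT,B)$, not the finite-$s$ density; Condition~$(C)$ is precisely a statement about the integrability of $s^{-1}$ times that density, which is strictly stronger than any information about the limit (indeed Condition~$(C)$ forces the limit to vanish, but not conversely). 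Neither reference delivers what you need. Your final clause, that weak convergence of $-dd^cT_j$ ``propagates the bound uniformly'', is likewise unsubstantiated: weak convergence controls $\sigma_j^I(s)$ for each fixed $s$, not the integral $\int_0^r s^{-1}\sigma_j^I(s)/s^{2(n-k-1)}\,ds$ uniformly in $j$.

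The paper closes this gap by a \emph{pointwise} comparison of forms rather than of Lelong numbers: since $\omega_I^m$ is a constant-coefficient positive $(m,m)$-form, there exist $A>0$ and a closed smooth $(m-1,m-1)$-form $\phi$ with $\omega_I^m\leq A(\omega_z\wedge\phi+\omega_t^m)$ near $\{|z|<r_0,\ |w_I|<c\}$. After wedging with $-dd^cT_j\wedge\omega_z^{n-k-1}$ and enlarging the domain to $\{|z|<2s\}\times B_0$, the $\omega_z\wedge\phi$ piece carries an extra factor of $\omega_z$, so dividing by $s^{2(n-k)}$ yields a bounded density of a positive closed current and the remaining $\int_0^r 2s\,ds$ is harmless; the $\omega_t^m$ piece is exactly a multiple of $\nu(-dd^cT,B_0,2s)$, to which Condition~$(C)$ applies directly in the original coordinates. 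Because the inequality $\omega_I^m\leq A(\omega_z\wedge\phi+\omega_t^m)$ holds at the level of forms, the bound is automatically uniform in $j$.
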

        \begin{proof}
            For $p=0$, the result is clear. \\
            So let $ 1 \leq p < n-k$.
            Let $r_0>0$ such that $\{|z|<2r_0\}\times B_0\subset\subset\Omega$. There exists $c>0$, small enough, such that for any increasing $m-$multiindex $I$, $$\{(z,t)\in\Omega;\ |z|<r_0,\ |w_I|<c\}\subset\{|z|<2r_0\}\times B_0.$$ Then for every $0<r\leq r_0$ one has  $$\{(z,t)\in\Omega;\ |z|<r,\ |w_I|<c\}\subset\{|z|<2r\}\times B_0.$$
             If we choose  $q=p-1$ in previous lemma, we obtain
            $$\begin{array}{ll}
                &\ds\int_{\{|z|<r,\ |w_I|<c\}}\  T_j\w{\theta_z}^p\w{\omega_z^{n-k-p}}\w{\omega_I}^m \\
                \leq&\ds \frac{1}{r^{2p}}\int_{\{|z|<r,\ |w_I|<c\}}\ T_j\w\omega_z^{n-k}\w\omega_I^m\\
                &\ds -\int_{0}^{r}\  \frac{1}{ s^{2p}} 2sds\ \int_{\{|z|<s,\ |w_I|<c\}}\ dd^cT_j\w\omega_z^{n-k-1}\w\omega_I^m\\
                \leq&\ds \frac{1}{r^{2p}}\int_{\{|z|<r,\ |w_I|<c\}}\ T_j\w\omega_z^{n-k}\w\omega_I^m\\
                &\ds +\int_{0}^{r}\   2sds\ \int_{\{|z|<s,\ |w_I|<c\}}\ -dd^cT_j\w\theta_z^p\w\omega_z^{n-k-1-p}\w\omega_I^m\\
                \leq&\ds \frac{1}{r^{2p}}\int_{\{|z|<r,\ |w_I|<c\}}\ T_j\w\omega_z^{n-k}\w\omega_I^m\\
                &\ds +r^2\ \int_{\{|z|<r,\ |w_I|<c\}}\ -dd^cT_j\w\theta_z^p\w\omega_z^{n-k-1-p}\w\omega_I^m.
              \end{array}$$

              Furthermore, for $r$ outside a  set at most countable, we have
              $$\ds\lim_{j\to+\infty}\int_{\{|z|<r,\ |w_I|<c\}} T_j\w\omega_z^{n-k}\w\omega_I^m= \int_{\{|z|<r,\ |w_I|<c\}}T\w\omega_z^{n-k}\w\omega_I^m.$$
            Since $-dd^cT$ is a positive closed current, then  thanks to Alessandrini-Bassanelli \cite{Al-Ba}, we have
            $$\sup_j \int_{\{|z|<r,\ |w_I|<c\}}\ -dd^cT_j\w\theta_z^p\w\omega_z^{n-k-1-p}\w\omega_I^m<+\infty.$$
            In the case $p = n-k$, it is easy to see that there exist $A>0$ and a closed $\mathcal C^\infty$ form $\phi$ on $\{|z|<2r_0\}\times B_0$ such that $\omega_I^m\leq A(\omega_z\w\phi+\omega_t^m)$ in neighborhood of $\{|z|<r_0,\ |w_I|<c\}$. So

            $$\begin{array}{l}
                \ds \sup_j \int_{0}^{r}\  \frac{1}{ s^{2(n-k)}} 2sds\ \int_{\{|z|<s,\ |w_I|<c\}}\ -dd^cT_j \w\omega_z^{n-k-1}\w\omega_I^m\\
               \leq\ds \sup_j \int_{0}^{r}\  \frac{A}{ s^{2(n-k)}} 2sds\ \int_{\{|z|<s,\ |w_I|<c\}}\ -dd^cT_j \w\omega_z^{n-k-1}\w(\omega_z\w\phi+\omega_t^m)\\
               \leq\ds \sup_j \int_{0}^{r}\  \frac{A}{ s^{2(n-k)}} 2sds\ \int_{\{|z|<2s\}\times B_0}\ -dd^cT_j \w\omega_z^{n-k-1}\w(\omega_z\w\phi+\omega_t^m)\\
               \leq\ds 2^{n-k}A\left(\int_{0}^{r} 2sds\int_{\{|z|<2s\}\times B_0}\ -dd^cT \w\theta_z^{n-k}\w\phi+\int_{0}^{r}\  \frac{\nu(-dd^cT,B,2s)}s ds\right).\\
            \end{array}$$
            Thanks to Condition $(C)$,  the last integral is finite.
      \end{proof}

    \begin{rem}(See \cite{Al-Ba})
        Let $U:=\cap_I\{(z,t)\in\Omega;\ |z|<r_0,\ |w_I|<c\}$, then thanks to the last proposition, there exist a subsequence of $(T_j)_{j \in \Bbb N}$, noted in the same way, and some currents $T^{(0)},T^{(1)},...,T^{(n-k)}$ defined on $U$ such that
        $$\ds \lim_{j\to+\infty}\left(\widetilde{T_j\w \theta_z^p}\right)=T^{(p)}$$
        weakly on $U$.
     \end{rem}
      \begin{lem}\label{lem2}
           With the same notations as in Proposition \ref{prop3}, if we denote by $$(dd^cT)^{(p-q-1)}:=\lim_{j\to+\infty}\widetilde{dd^cT_j\w\theta_z^{p-q-1}}$$ then we have
           \begin{equation}\label{equ3.1}
           \begin{array}{l}
               \ds\int_0^r \frac{sds}{s^{2(q+1)}}\int_{\{|z|<s,\ |w_I|<c\}}- (dd^cT)^{(p-q-1)}\w\omega_z^{n-k-p+q}\w\omega_I^m \\
               \ds\leq \lim_{j\to+\infty}\int_0^r \frac{sds}{s^{2(q+1)}}\int_{\{|z|<s,\ |w_I|<c\}}- dd^cT_j\w\theta_z^{p-q-1}\w\omega_z^{n-k-p+q}\w\omega_I^m<+\infty.
             \end{array}
           \end{equation}

       \end{lem}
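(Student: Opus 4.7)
The plan is to split the two claims in (\ref{equ3.1}) into two independent arguments: the inequality will follow from weak lower semicontinuity of mass for positive Radon measures on open sets combined with Fatou's lemma, while the finiteness will follow by re-running the argument of Proposition \ref{prop3} with $-dd^cT$ in place of $T$ and invoking Condition $(C)$.

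Since $T$ is positive plurisuperharmonic and the regularizing kernel is positive, each $T_j$ is also positive prh, so $-dd^cT_j$ is a smooth positive closed $(k+1,k+1)$-current. Its wedge with $\theta_z^{p-q-1}$ is therefore positive closed on $\{z \neq 0\}$, and its trivial extension across $\{z=0\}$ stays positive. By the remark following Proposition \ref{prop3}, applied to $-dd^cT$ instead of $T$, we may assume after extraction of a subsequence that $-\widetilde{dd^cT_j \wedge \theta_z^{p-q-1}}$ converges weakly on $U$ to the positive current $-(dd^cT)^{(p-q-1)}$. Fix $s \in (0,r_0]$ such that $\Omega_s := \{|z|<s,\ |w_I|<c\} \Subset U$. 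The test form $\omega_z^{n-k-p+q} \wedge \omega_I^m$ being smooth with nonnegative coefficients, the standard lower semicontinuity of mass of positive measures on open sets yields
\[
\int_{\Omega_s} -(dd^cT)^{(p-q-1)} \wedge \omega_z^{n-k-p+q} \wedge \omega_I^m \leq \liminf_{j\to\infty} \int_{\Omega_s} -dd^cT_j \wedge \theta_z^{p-q-1} \wedge \omega_z^{n-k-p+q} \wedge \omega_I^m.
\]
Multiplying by the nonnegative weight $s^{1-2(q+1)}$ and integrating in $s \in (0,r)$, Fatou's lemma gives the first inequality in (\ref{equ3.1}). The reason only an inequality holds is that the trivial extension may drop any mass that $(dd^cT)^{(p-q-1)}$ could carry on $\{z=0\}$, and such mass cannot be recovered from weak convergence tested on open sets.

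For the finiteness of the right-hand side, apply Lemma \ref{lem1} to the smooth form $\psi = -dd^cT_j$ with the appropriate shift of exponents, so that the integral under consideration is rewritten as a boundary term at $s=r$ plus an iterated integral involving one more power of $\theta_z$ on $-dd^cT_j$. Since $-dd^cT_j$ is positive closed, the Alessandrini--Bassanelli bound \cite{Al-Ba} for positive closed currents supplies a uniform bound in $j$ in the non-degenerate range of exponents; in the boundary range one uses the domination $\omega_I^m \leq A(\omega_z \wedge \phi + \omega_t^m)$ already used at the end of Proposition \ref{prop3}. After the reduction, everything is controlled by $\int_0^r \nu(-dd^cT,B_0,2s)\,s^{-1}\,ds$, which is finite by Condition $(C)$. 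The main obstacle is this last bookkeeping step: one has to ensure that the shifted exponents at each stage of the induction never force a divergence at $s=0$ that Condition $(C)$ cannot absorb, but since the iteration exhausts the $\theta_z$ factors in finitely many steps and only the terminal step involves the $s^{-1}$-singularity controlled by Condition $(C)$, the estimate closes.
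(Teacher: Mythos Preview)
The paper states Lemma~\ref{lem2} without proof; it is intended as an immediate consequence of Lemma~\ref{lem1} and Proposition~\ref{prop3}, so there is no detailed argument to compare against.

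Your proof of the inequality is correct: lower semicontinuity of the mass of positive measures on open sets, followed by Fatou's lemma in the $s$-variable, is exactly the right mechanism.

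Your finiteness argument reaches the right conclusion but the description is garbled. You write that applying Lemma~\ref{lem1} to $\psi=-dd^cT_j$ produces ``an iterated integral involving one more power of $\theta_z$ on $-dd^cT_j$''. It does not: since $dd^c\psi=0$, both integral terms in Lemma~\ref{lem1} vanish, and what remains is the bare identity
\[
\frac{1}{s^{2(q+1)}}\int_{\{|z|<s,\,|w_I|<c\}}\!\! -dd^cT_j\wedge\theta_z^{p-q-1}\wedge\omega_z^{n-k-p+q}\wedge\omega_I^m
=\int_{\{|z|<s,\,|w_I|<c\}}\!\! -dd^cT_j\wedge\theta_z^{p}\wedge\omega_z^{n-k-1-p}\wedge\omega_I^m
\]
for $p\le n-k-1$, after which the Alessandrini--Bassanelli bound and, in the case $p=n-k$, the inequality $\omega_I^m\le A(\omega_z\wedge\phi+\omega_t^m)$ together with Condition~$(C)$ finish the job, just as you say. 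So the induction you sketch is not needed; there is no tower of iterated integrals to close.

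The route most in line with the paper is even shorter: apply Lemma~\ref{lem1} to $\psi=T_j$ and solve for the double integral. One gets
\[
2\int_0^r\frac{s\,ds}{s^{2(q+1)}}\int_{\{|z|<s,\,|w_I|<c\}}\!\!-dd^cT_j\wedge\theta_z^{p-q-1}\wedge\omega_z^{n-k-p+q}\wedge\omega_I^m
\ \le\ \frac{1}{r^{2(q+1)}}\int_{\{|z|<r,\,|w_I|<c\}}\!\!T_j\wedge\theta_z^{p-q-1}\wedge\omega_z^{n-k-p+q+1}\wedge\omega_I^m,
\]
the remaining terms on the right being nonpositive. The right-hand side is uniformly bounded in $j$ by Proposition~\ref{prop3}, and along the chosen subsequence it converges for a.e.\ $r$, which also yields existence of the limit (not merely $\limsup<\infty$) in~(\ref{equ3.1}).
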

       \begin{lem}\label{lem3}
             Let $R_j^p:=\log|z|dd^cT_j\w\theta_z^p$. There exist $S^{(0)},.....,S^{(n-k-1)}$ positive prh currents on  $U$ such that
            $$\ds\lim_{j\to+\infty}\widetilde{R_j^p}= S^{(p)}.$$
            Furthermore,  for every open ball $B$ and $r>0$ such that $\{|z|<r\}\times B\subset U$ one has
            $$\begin{array}{l}
                \ds \lim_{j\to+\infty}\int_0^r \frac{sds}{s^{2}}\int_{\{|z|<s\}\times B}-dd^c T_j\w\theta_z^{p-1}\w\omega_z^{n-k-p}\w\omega_t^m \\
                \ds=-\log r\int_{\{|z|<r\}\times B}(dd^c T)^{(p-1)}\w\omega_z^{n-k-p}\w\omega_t^m + \int_{\{|z|<r\}\times B} S^{(p-1)}\w\omega_z^{n-k-p}\w\omega_t^m.
              \end{array}$$
      \end{lem}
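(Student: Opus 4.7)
The plan is to handle the two assertions of the lemma separately: first extract the limit currents $S^{(p)}$ by a compactness argument, then verify the integral formula by a coarea/Fubini computation that reduces to known weak limits.

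For the existence and positivity of $S^{(p)}$, I would start by applying the argument in the proof of Proposition~\ref{prop3} to the positive closed current $-dd^cT$ (which is positive closed because $T$ is prh) and its regularizations $-dd^cT_j$; the Alessandrini--Bassanelli estimate gives
\[
\sup_j\int_{\{|z|<r,\,|w_I|<c\}}(-dd^cT_j)\wedge\theta_z^p\wedge\omega_z^{n-k-1-p}\wedge\omega_I^m<+\infty.
\]
Since $\log|z|\le 0$ for $|z|<1$ and $dd^cT_j\le 0$, one has $R_j^p=|\log|z|\,|\,(-dd^cT_j)\wedge\theta_z^p\ge 0$; combined with $\log|z|\in L^1_{loc}$, this yields a uniform local mass bound for $\widetilde{R_j^p}$ on $U$. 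Weak compactness of positive currents, together with a diagonal extraction refining the subsequence of the Remark following Proposition~\ref{prop3}, produces a positive weak limit $S^{(p)}$.

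To check that $S^{(p)}$ is prh, my preferred route is to identify $S^{(p)}$ with the Bedford--Taylor--Demailly product $\log|z|\cdot(dd^cT)^{(p)}$: the current $(dd^cT)^{(p)}$ is closed (as a weak limit of the closed currents $dd^cT_j\wedge\theta_z^p$), and the integrability of $\log|z|$ against its trace measure is guaranteed by Condition~$(C)$ together with the bounds of Proposition~\ref{prop3}. Applying the standard identity $dd^c(u\beta)=dd^cu\wedge\beta$ for $u$ psh and $\beta$ closed yields
\[
dd^cS^{(p)}=\tfrac{1}{2}\theta_z\wedge(dd^cT)^{(p)}=\tfrac{1}{2}(dd^cT)^{(p+1)}\le 0,
\]
so $S^{(p)}$ is prh. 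As a cross-check one can compute $dd^cR_j^p$ directly using $d\theta_z^p=d^c\theta_z^p=0$ and Leibniz, obtaining $dd^cR_j^p=\tfrac{1}{2}dd^cT_j\wedge\theta_z^{p+1}+d\log|z|\wedge d^c(dd^cT_j)\wedge\theta_z^p$; the first term is $\le 0$ with limit $\tfrac{1}{2}(dd^cT)^{(p+1)}$, and two integrations by parts re-express the cross term as $\int dd^c\phi\cdot R_j^p$ plus terms controlled by the uniform mass estimate above.

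For the integral formula, I would apply Fubini (coarea) to the positive Radon measure $\mu_j:=(-dd^cT_j)\wedge\theta_z^{p-1}\wedge\omega_z^{n-k-p}\wedge\omega_t^m$, setting $F_j(s):=\int_{\{|z|<s\}\times B}\mu_j$. Swapping the order of integration and using $\log|z|\,(-dd^cT_j)\wedge\theta_z^{p-1}=-R_j^{p-1}$,
\[
\int_0^r\frac{F_j(s)}{s}\,ds=\int_{\{|z|<r\}\times B}\log(r/|z|)\,d\mu_j=\log r\cdot F_j(r)+\int_{\{|z|<r\}\times B}R_j^{p-1}\wedge\omega_z^{n-k-p}\wedge\omega_t^m.
\]
Letting $j\to\infty$ for $r$ outside an at-most countable set, the Remark after Proposition~\ref{prop3} gives $F_j(r)\to-\int_{\{|z|<r\}\times B}(dd^cT)^{(p-1)}\wedge\omega_z^{n-k-p}\wedge\omega_t^m$, while the weak convergence $\widetilde{R_j^{p-1}}\to S^{(p-1)}$ established in the first step provides convergence of the last integral. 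This yields exactly the stated identity.

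The main obstacle is the prh property of $S^{(p)}$: both routes for establishing $dd^cS^{(p)}\le 0$ depend essentially on Condition~$(C)$, either through the integrability of $\log|z|$ with respect to the trace measure of $(dd^cT)^{(p)}$ (needed to make sense of the Bedford--Taylor--Demailly product) or through the control of the sign-indefinite cross term $d\log|z|\wedge d^c(dd^cT_j)\wedge\theta_z^p$ in the direct Leibniz computation.
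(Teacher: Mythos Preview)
The paper does not give a proof but defers to Lemma~3.3 of \cite{Al-Ba}; your coarea/Fubini identity combined with weak compactness is exactly the Alessandrini--Bassanelli argument, so the overall strategy is the intended one.

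There is, however, one real gap. The uniform local mass bound for $\widetilde{R_j^p}$ does \emph{not} follow from the observation ``$\log|z|\in L^1_{loc}$'' together with $\sup_j\|(-dd^cT_j)\wedge\theta_z^p\|<\infty$: a sequence of positive measures with uniformly bounded total mass may still concentrate near $\{z=0\}$, so $\int|\log|z||\,d\mu_j$ need not stay bounded. The correct route is already contained in the second half of your proposal. Write your Fubini identity with $\omega_I^m$ in place of $\omega_t^m$:
\[
\int_{\{|z|<r,\,|w_I|<c\}} R_j^{p}\wedge\omega_z^{n-k-1-p}\wedge\omega_I^m
=\int_0^r\frac{ds}{s}\int_{\{|z|<s,\,|w_I|<c\}}(-dd^cT_j)\wedge\theta_z^{p}\wedge\omega_z^{n-k-1-p}\wedge\omega_I^m \; -\log r\,(\cdots).
\]
By Lemma~\ref{lem2} (and the estimates inside the proof of Proposition~\ref{prop3}) the right-hand side is bounded uniformly in $j$, for every increasing $m$-multiindex $I$; this is precisely the mass bound that allows extraction of the weak limit $S^{(p)}$. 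In other words, reverse the order of your two steps: first establish the pointwise-in-$j$ coarea identity, use it together with Lemma~\ref{lem2} to obtain the mass bound, and only then pass to the limit.

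A side remark on the prh property: in your Leibniz computation the cross term $d\log|z|\wedge d^c(dd^cT_j)\wedge\theta_z^p$ vanishes identically, since $dd^cT_j$ is both $d$-closed and $d^c$-closed. Hence $dd^cR_j^p=\tfrac12\,dd^cT_j\wedge\theta_z^{p+1}\le 0$ directly, and the prh property passes to the weak limit $S^{(p)}$ without any appeal to Condition~$(C)$ or to a Bedford--Taylor--Demailly product; your ``main obstacle'' is therefore not an obstacle at all.
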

      \begin{proof}
        The proof of this lemma is similar to lemma 3.3 in \cite{Al-Ba}.
      \end{proof}
      \subsection{Proof of the main result}
      \begin{lem}\label{lem4}
            If $T^{(1)}$ and $S^{(0)}$ are as in previous lemma, then
            $$\nu(T,B)=\nu(T^{(1)},B)-2\nu(S^{(0)},B).$$
      \end{lem}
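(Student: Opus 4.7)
The plan is to apply the Lelong-Jensen identity of Lemma \ref{lem1} to $\psi = T_j$ with $p = 1$, $q = 0$, and the multi-index $I$ chosen so that $w_I = t$, hence $\omega_I = \omega_t$. This yields, for each $j$ and every $r > 0$ with $\{|z|<r\}\times B \subset U$, the identity
$$\int_{\{|z|<r\}\times B} T_j\w\theta_z\w\omega_z^{n-k-1}\w\omega_t^m = \frac{1}{r^2}\int_{\{|z|<r\}\times B} T_j\w\omega_z^{n-k}\w\omega_t^m - 2\int_0^r \frac{ds}{s}\int_{\{|z|<s\}\times B} dd^cT_j\w\omega_z^{n-k-1}\w\omega_t^m + \frac{2}{r^2}\int_0^r s\,ds \int_{\{|z|<s\}\times B} dd^cT_j\w\omega_z^{n-k-1}\w\omega_t^m .$$

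I divide this by $r^{2(n-k-1)}$ and pass to the limit $j\to\infty$ along a subsequence and a full-measure set of radii $r$. By the very definition of $T^{(1)}$ in the remark following Proposition \ref{prop3}, the normalised left-hand side tends to $\nu(T^{(1)},B,r)$, and the first right-hand term tends to $\nu(T,B,r)$. For the $\frac{ds}{s}$-kernel integral I apply Lemma \ref{lem3} with $p=1$: since $\theta_z^{p-1}=1$ and $(dd^cT)^{(0)}$ coincides with $dd^cT$ (as $T_j\w\theta_z^0 = T_j$ is already smooth), division by $r^{2(n-k-1)}$ and the identification $\int_{\{|z|<r\}\times B} -dd^cT\w\omega_z^{n-k-1}\w\omega_t^m = r^{2(n-k-1)}\nu(-dd^cT,B,r)$ give the limit $2\log r\cdot\nu(-dd^cT,B,r) + 2\nu(S^{(0)},B,r)$. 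For the $\frac{2s}{r^2}$-kernel integral, the weak convergence $dd^cT_j\to dd^cT$ and the positive-closedness of $-dd^cT$ produce directly $-\frac{2}{r^{2(n-k)}}\int_0^r s^{2(n-k)-1}\nu(-dd^cT,B,s)\,ds$. Assembling,
$$\nu(T^{(1)},B,r) = \nu(T,B,r) + 2\log r\cdot\nu(-dd^cT,B,r) + 2\nu(S^{(0)},B,r) - \frac{2}{r^{2(n-k)}}\int_0^r s^{2(n-k)-1}\nu(-dd^cT,B,s)\,ds .$$

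To finish I let $r\to 0^+$. Condition $(C)$ asserts that $s\mapsto \nu(-dd^cT,B,s)/s$ is integrable near $0$; since $\nu(-dd^cT,B,\cdot)$ is positive and non-decreasing (the current $-dd^cT$ being positive closed of bidegree $(k+1,k+1)$), the Hardy-type estimate
$$\tfrac{1}{2}\nu(-dd^cT,B,r)\log(1/r) \le \int_r^{\sqrt r} \frac{\nu(-dd^cT,B,s)}{s}\,ds \longrightarrow 0$$
forces $\log r\cdot\nu(-dd^cT,B,r)\to 0$ and in particular $\nu(-dd^cT,B,r)\to 0$, whence the last integral term also vanishes in the limit. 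Combined with the existence of $\nu(T,B)$, $\nu(T^{(1)},B)$, $\nu(S^{(0)},B)$ (obtained by applying Theorem \ref{the1} to each of the positive prh currents $T$, $T^{(1)}$, $S^{(0)}$, after checking that each inherits Condition $(C)$ from $T$), the identity reduces to $\nu(T^{(1)},B) = \nu(T,B) + 2\nu(S^{(0)},B)$, which is the claim.

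The main obstacle I expect is the interchange of $\lim_j$ with the improper integral $\int_0^r (ds/s)\int dd^cT_j\w\omega_z^{n-k-1}\w\omega_t^m$; naive dominated convergence fails because the weak limit of the inner integral carries mass concentrated on $\{z=0\}\times B$, producing a logarithmic obstruction. Lemma \ref{lem3} is precisely the tool that captures this interchange by introducing the compensating current $S^{(0)}$, the weak limit of $\log|z|\,dd^cT_j$. A second delicate point is the Hardy-type vanishing of $\log r\cdot\nu(-dd^cT,B,r)$ under Condition $(C)$; without it, the $\log r$ term in the fixed-$r$ identity would block the final passage to the limit.
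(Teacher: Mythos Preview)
Your proof is correct and follows essentially the same route as the paper: apply Lemma~\ref{lem1} with $p=1$, $q=0$, pass to the limit $j\to\infty$ using Lemma~\ref{lem3} for the $ds/s$ integral, then let $r\to 0$ under Condition~$(C)$. Your direct Hardy-type bound $\tfrac12\nu(-dd^cT,B,r)\log(1/r)\le\int_r^{\sqrt r}\nu(-dd^cT,B,s)\,ds/s\to 0$ for the vanishing of the $\log r$ term is in fact cleaner than the paper's contradiction argument, which assumes a nonzero limit and compares with the divergent integral $\int ds/(s\log s)$.
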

      \begin{proof}
            Thanks to Lelong-Jensen formula, we have
             \begin{eqnarray}
              &&\ds \int_{\{|z|<r\}\times B}\  T_{j}\w{\theta_z}\w\omega_z^{n-k-1}\w\omega_t^m     \label{3.2}\\ &= & \ds \frac {1}{r^{2}}\int_{\{|z|<r\}\times B}\  T_{j} \w\omega_z^{n-k}\w\omega_t^m \label{3.3} \\
                & &+\ds \int_{0}^{r}\  \frac{1}{s^2}2sds \int_{\{|z|<s\}\times B}\ - dd^cT_{j}\w\omega_z^{n-k-1}\w{\omega_t}^m \label{3.4} \\
               &  &-\ds \int_0^r \frac{1}{ r^{2}}2sds\ \int_{\{|z|<s\}\times B}\ - dd^cT_{j} \w \omega_z^{n-k-1}\w\omega_t^m\label{3.5}
            \end{eqnarray}
            It is easy to see that  $$\lim_{j\rightarrow+\infty} (\ref{3.2}) =\ds \int_{\{|z|<r\}\times B}\  T^{(1)}\w\omega_z^{n-k-1}\w\omega_t^m,$$
            $$\lim_{j\rightarrow+\infty} (\ref{3.3}) =\ds\frac{1}{r^2} \int_{\{|z|<r\}\times B}\  T\w\omega_z^{n-k}\w\omega_t^m$$
            for $r$ outside a subset which is at most countable.\\

            Set $G_j$ the function defined by
            $$G_j(s):= \int_{\{|z|<s\}\times B}- dd^cT_j\w\omega_z^{n-k-1}\w\omega_t^m.$$
            The function $G_j$ is positive and increasing, so thanks to dominated convergence  theorem, we get  $$\lim_{j\to+\infty} (\ref{3.5}) = \ds \int_0^r \frac{1}{ r^{2}}2sds \int_{\{|z|<s\}\times B}- dd^cT \w \omega_z^{n-k-1}\w\omega_t^m.$$
            Thanks to Lemma \ref{lem3}, one has
            $$\ds\lim_{j\rightarrow+\infty} (\ref{3.4})=\ds -2\log r\int_{\{|z|<r\}\times B}dd^c T\w\omega_z^{n-k-1}\w\omega_t^m + \int_{\{|z|<r\}\times B}2 S^{(0)}\w\omega_z^{n-k-1}\w\omega_t^m.$$
            Hence
            \begin{eqnarray}
               \mathscr A&:=&\ds \frac{1}{r^{2(n-k-1)}} \int_{\{|z|<r\}\times B}\ \left( T^{(1)}- 2S^{(0)}\right)\w\omega_z^{n-k-1}\w\omega_t^m\label{3.6}\\
               &=& \ds \frac{1}{r^{2(n-k)}} \int_{\{|z|<r\}\times B}\  T\w\omega_z^{n-k}\w\omega_t^m \label{3.7} \\
               && - \frac{2\log r}{r^{2(n-k-1)}} \int_{\{|z|<r\}\times B}\  dd^cT\w\omega_z^{n-k-1}\w\omega_t^m \label{3.8}\\
               && \ds - \frac{1}{r^{2(n-k)}}\int_0^r 2 s ds \int_{\{|z|<s\}\times B}\  - dd^cT\w\omega_z^{n-k-1}\w\omega_t^m.\label{3.9}
            \end{eqnarray}

            If we set $$G(s):=\ds\int_{\{|z|<s\}\times B}\  - dd^cT\w{{\omega_z}^{n-k-1}}\w\omega_t^m,$$ then by  Lemma \ref{lem2} with $p=n-k$ and $q=p-1$, the function  $s\longmapsto s^{1-2(n-k)}G(s)$ is integrable on $[0,r]$; it follows that
            $$ \ds\frac{1}{r^{2(n-k)}}\int_0^r sG(s)ds\leq \int_0^r\frac{sG(s)}{s^{2(n-k)}}ds<+\infty.$$
            As this last integral tends to 0 when $r\to 0$, we obtain  $\lim_{r\to 0} (\ref{3.9}) =0$.\\
            We remark that  $T^{(1)}$ and $S^{(0)}$ are positive prh currents, and by Lemma \ref{lem1},
            $$\begin{array}{lcl}
                \ds \int_0^r\frac{\nu((dd^cT)^{(1)},B,s)}sds&
                =&\ds\int_0^r\frac{ds}{s^{2(n-k-2)+1}} \lim_{j\to+\infty}\int_{\{|z|<s\}\times B}dd^cT_j\w \theta_z\w \omega_z^{n-k-2}\w\omega_t^m\\
                &=&\ds\int_0^r\frac1{s^{2(n-k-1)+1}}\lim_{j\to+\infty}\int_{\{|z|<s\}\times B}dd^cT_j\w \omega_z^{n-k-1}\w\omega_t^m\\
                &=&\ds\int_0^r\frac{\nu(dd^cT,B,s)}sds.
           \end{array}$$
           With a similar computation for $S^{(0)}$, we conclude that $T^{(1)}$and $S^{(0)}$ satisfy  Condition $(C)$. So by Theorem  \ref{the1}, $\nu(T^{(1)},B)$ and $\nu(S^{(0)},B)$ exist (i.e. $\lim_{r\to0}(\ref{3.6})$ exists). With the same reason, $\lim_{r\to 0} (\ref{3.7}) =\nu(T,B)$ is finite, so (\ref{3.8}) has a finite limit when $r\to 0$. Let
           $$\ds \mathfrak a:=\lim_{ r\to0}(\ref{3.8})=\lim_{ r\to0}\frac{2\log r}{r^{2(n-k-1)}} \int_{\{|z|<r\}\times B}\ - dd^cT\w{{\omega_z}^{n-k-1}}\w\omega_t^m.$$
           Assume that $\mathfrak a\not=0$,
           $$\ds \mathfrak a=\lim_{r\to0}r\log r\frac{2r G(r)}{r^{2(n-k)}}\quad \Longrightarrow\quad \ds\frac{\mathfrak a}{r\log r}\ \underset{\vartheta(0)}\cong\frac{2r G(r)}{r^{2(n-k)}}.$$
           So  $\int_0^{\frac{1}{2}}\frac{\mathfrak a}{s\log s }ds$ converges if and only if  $\int_0^{\frac{1}{2}}\frac{2s G(s)}{s^{2(n-k)}}ds$ converges, which is in contradiction because $$\ds\int_0^{\frac{1}{2}}\frac{\mathfrak a}{s\log s }ds=\mathfrak a[\log|\log s|\ ]_0^{\frac12}=+ \infty\quad \hbox{ and }\quad \ds\int_0^{\frac{1}{2}}\frac{2sG(s)}{s^{2(n-k)}}ds<+\infty;$$ hence $\mathfrak a=0$ and we obtain
            $$\nu(T^{(1)},B)-2 \nu(S^{(0)},B)=\nu(T,B).$$
      \end{proof}
      Now we can finish the proof of the main result. We have
      $$\begin{array}{l}
            \nu(T,B)\\
            =\ds\lim_{r\to0}\lim_{j\to+\infty}\frac{1}{r^{2(n-k-1)}}\int_{\{|z|<r\}\times B}\left(T_j\w\theta_z-2\log|z|dd^cT_j\right)\w\omega_z^{n-k-1}\w\omega_t^m\\
            =\ds\lim_{r\to0}\lim_{j\to+\infty}\int_{\{|z|<r\}\times B}\left(T_j\w\theta_z-2\log|z|dd^cT_j\right)\w\theta_z^{n-k-1}\w\omega_t^m\\
            =\ds\lim_{r\to 0}\int_{\{|z|<r\}\times B}T^{(n-k)}\w\omega_t^m-2\int_{\{|z|<r\}\times B}S^{(n-k-1)}\w\omega_t^m.
            \end{array}$$
            By the proof of the main theorem in the case $k=n$, we can deduce that there exist two positive functions $f_1,\ f_2\in L^1_{loc}(V)$ with $V:=U\cap \{0\}\times \Bbb C^m$ such that  $$\nu(T,B)=\ds\int_B(f_1(t)-2f_2(t))\omega_t^m.$$
            So
            $$\ds\nu(T,B)=\int_Bf(t)\omega_t^m $$
             where $f\in L^1_{loc}(V).$

\section*{acknowledgement}
    The authors would like to thank Professors Jean-Pierre Demailly and Khalifa Dabbek for many fruitful discussions     concerning this paper.

\end{document}